\theoremstyle{definition}
\newtheorem{mainthm}{Theorem}
\newtheorem{theorem}{Theorem}[section]
\newtheorem{definition}[theorem]{Definition}
\newtheorem{conjecture}[theorem]{Conjecture}
\newtheorem{lemma}[theorem]{Lemma}
\newtheorem{proposition}[theorem]{Proposition}
\newtheorem{corollary}[theorem]{Corollary}
\newtheorem*{theorem*}{Theorem}
\theoremstyle{remark}
\newtheorem{example}[theorem]{Example}
\def\RR{\mathbb{R}}
\def\T{\mathbb{T}}
\def\L{\mathbb{L}}
\def\BR{\mathrm{BR}}
\def\Sym{\mathrm{Sym}}
\def\Cut{\mathrm{Cut}}
\DeclareMathOperator{\Vol}{Vol}
\begin{document}

\title{Bounded ratios for Lorentzian matrices}


\author{Daoji Huang}
\address{\rm Daoji Huang, University of Massachusetts Amherst}
\email{daojihuang@umass.edu}

\author{June Huh}
\address{\rm June Huh, Princeton University and Korea Institute for Advanced Study}
\email{huh@princeton.edu}

\author{Daniel Soskin}
\address{\rm Daniel Soskin, University of California, Los Angeles}
\email{dsoskin@math.ucla.edu}

\author{Botong Wang}
\address{\rm Botong Wang, University of Wisconsin–Madison}
\email{wang@math.wisc.edu}

\begin{abstract}
We study multiplicative inequalities among entries of Lorentzian matrices, referred to as \emph{bounded ratios}.  
These inequalities can be viewed as generalizations of  the classical Alexandrov--Fenchel inequalities for mixed volumes.
Our main structural result identifies the cone of all bounded ratios on Lorentzian matrices with the dual of the \emph{cut cone}, a finitely generated integral polyhedral cone extensively studied in metric geometry and graph theory. 
We examine in detail the \emph{pentagonal ratio}, which first appears for Lorentzian matrices of size at least five.
For Lorentzian matrices of size three, we determine the optimal bounding constants across the entire cone of bounded ratios, obtaining an explicit entropy-like formula. We conjecture that any normalized bounded ratio is, in fact, bounded by $2$.
\end{abstract}
\maketitle

\section{introduction}

A \emph{Lorentzian matrix} is a symmetric matrix with nonnegative real entries that has at most one positive eigenvalue.
We denote by $\L_n$ the set of $n\times n$ Lorentzian matrices.
These matrices appear prominently in space-time geometry and convex geometry. In this work, we study \emph{multiplicative inequalities} for Lorentzian matrices---inequalities of the form
\[
\prod_{i \le j} p_{ij}^{\alpha_{ij}} \le c \prod_{i \le j} p_{ij}^{\beta_{ij}} \ \ \text{for all $(p_{ij}) \in \L_n$,}
\]
where $\alpha_{ij}$ and $\beta_{ij}$ are nonnegative integers, and $c$ is a positive constant.

For a collection of convex bodies $K=(K_1, \dots, K_d)$ in $\RR^d$, the \emph{mixed volume} of $K$ is defined to be the normalized coefficient of $t_1\cdots t_d$ in the associated volume polynomial:
\[
V(K_1, \dots, K_d)\coloneq \frac{1}{d!}\partial_1\cdots \partial_d \Vol_n(t_1K_1+\dots t_dK_d).
\]
The celebrated \emph{Alexandrov--Fenchel inequality} states that,
for any collection of convex bodies 
\[
 P_1,\ldots,P_n \ \ \text{and} \ \ K_1,\ldots,K_{d-2} \ \ \text{in $\RR^d$,}
\]
the symmetric $n \times n$ matrix of $d$-dimensional mixed volumes $(p_{ij})$ given by
\[
p_{ij}=V(P_i, P_j, K_1, \dots, K_{d-2}) 
\]
is a Lorentzian matrix \cite[Section 7.3]{Schneider}. In particular, any of its $2 \times 2$ principal minor is nonpositive, that is, 
\[
p_{ii}p_{jj} \le p_{ij}^2 \ \ \text{for all $i$ and $j$.}
\]

The Alexandrov--Fenchel inequality above is one of the first examples of bounded ratios on Lorentzian matrices, which we now define. Let $\Sym_n(\RR_{> 0})$ denote the set of $n \times n$ symmetric matrices with positive entries, and let $X \subseteq \Sym_n(\RR_{> 0})$ be a nonempty subset.
Given a real vector $\alpha=(\alpha_{ij})_{1 \le  i \le j \le n}$, we write
\[
\alpha=\sum_{i \le j} \alpha_{ij} \mathbf{e}_{ij},
\]
where $\{\mathbf{e}_{ij}\}_{i \le j}$ is the standard basis of the space of all such vectors. 

\begin{definition}
The set of \emph{bounded ratios} on $X$, denoted $\BR(X)$, is the set of all real vectors $\alpha=(\alpha_{ij})_{1 \le  i \le j \le n}$
that satisfy the following condition:
\[
\text{There is a positive constant $c$ such that $\prod_{1\le i\le j\le n}p_{ij}^{\alpha_{ij}}\le c$ for all matrices $(p_{ij})$ in $X$.}
\]
We define the \emph{optimal bounding constant} of a bounded ratio $\alpha$ on $X$, denoted $f(\alpha)=f_X(\alpha)$,  to be the infimum of all possible constants $c$ satisfying the displayed inequality for all $(p_{ij})$ in $X$. 
\end{definition}

It is straightforward to check that $\BR(X)$ is a convex cone in $\RR^{\binom{n+1}{2}}$ and $f(\alpha)$ is a log-convex homogeneous function of degree $1$ on the cone of bounded ratios. 
When $X$ is the set of $n\times n$ positive semidefinite matrices with positive entries $\operatorname{PSD}^+_n$, 
Yu shows in \cite{Yu} that the cone of bounded ratios on $X$  is a finitely generated integral polyhedral cone with $\binom{n}{2}$ extremal rays corresponding to the $2 \times 2$ principal minors:
\[
\BR(\operatorname{PSD}^+_n)=\operatorname{Cone}(-\mathbf{e}_{ii}-\mathbf{e}_{jj}+2\mathbf{e}_{ij})_{1 \le i<j \le n}.
\]
In contrast, bounded ratios for Lorentzian matrices, as we will see, exhibit more intricate behaviors. 
For related notions of bounded ratios in the study of cluster algebras and total positivity, see \cite{Fallat-Johnson,Fallat-Gekhtman-Johnson,Gekhtman-Greenberg-Soskin,Gekhtman-Soskin}.

\begin{definition}
A bounded ratio $\alpha$ is \emph{integral} if all the entries $\alpha_{ij}$ are integers. 
An integral bounded ratio on $X$ is said to be \emph{primitive} if it cannot be expressed as the sum of two nonzero integral bounded ratios on $X$.
\end{definition}

Let $\L_n^+$ be the set of $n\times n$ Lorentzian matrices with only positive entries. Since any Lorentzian matrix is a limit of Lorentzian matrices with positive entries \cite[Section 2]{Branden-Huh}, the multiplicative inequalities for $\L_n$ are obtained by clearing the denominator in $p^\alpha \le c$ for a bounded ratio $\alpha$ on $\L_n^+$.
Our main result, Theorem~\ref{mth:BR} below, implies that $\BR(\L_n^+)$ is a finitely generated integral polyhedral cone. 

\begin{example}
The bounded ratios on $\L_2^+$ are exactly the nonnegative multiples of 
\[
\alpha^{12}\coloneq \mathbf{e}_{11}+\mathbf{e}_{22}-2\mathbf{e}_{12}.
\]
In other words, $\BR(\L_2^+)=\operatorname{Cone}(\alpha^{12})$. 
The optimal bounding constants are determined by the condition $f(\alpha^{12})=1$. We refer to the bounded ratio $\alpha^{12}$ as the \emph{Alexandrov--Fenchel type}.
\end{example}

\begin{example}
Theorem~\ref{mth:BR} shows that 
there are precisely three primitive bounded ratios on $\L_3^+$, corresponding to the inequalities
\[
\frac{p_{23}p_{11}}{p_{12}p_{13}} \le 2, \quad
\frac{p_{13}p_{22}}{p_{12}p_{23}} \le 2, \quad
\frac{p_{12}p_{33}}{p_{13}p_{23}} \le 2 \quad \text{for all $(p_{ij}) \in \L_3^+$.}
\]
The cone of bounded ratios $\BR(\L_3^+)$ is the three-dimensional simplicial cone in $\RR^6$ generated by
\[
\alpha^{23|1}\coloneq \mathbf{e}_{23}+\mathbf{e}_{11}-\mathbf{e}_{12}-\mathbf{e}_{13}, \quad
\alpha^{13|2}\coloneq\mathbf{e}_{13}+\mathbf{e}_{22}-\mathbf{e}_{12}-\mathbf{e}_{23},  \quad
\alpha^{12|3}\coloneq\mathbf{e}_{12}+\mathbf{e}_{33}-\mathbf{e}_{13}-\mathbf{e}_{23}.
\]
We refer to the bounded ratios $\alpha^{ij|k}$ as the \emph{triangular ratios}. The Alexandrov--Fenchel type ratios can be expressed as the pairwise sums
\[
\alpha^{12}=\alpha^{23|1}+\alpha^{13|2},
 \quad
\alpha^{13}=\alpha^{23|1}+\alpha^{12|3},
\quad
\alpha^{23}=\alpha^{13|2}+\alpha^{12|3},
\]
which are bounded but not primitive on $\L_3^+$.
To see the validity of the displayed inequalities on $\L_3^+$, we use the implication
\[
\text{$
\left(
\begin{matrix}
	p_{11} & p_{12} & p_{13} \\
	p_{12} & p_{22} & p_{23} \\
	p_{13} & p_{23} & p_{33}
\end{matrix}\right)$ is Lorentzian}
\Longrightarrow
\text{$
\left(
\begin{matrix}
	0 & p_{12} & p_{13} \\
	p_{12} & 0 & p_{23} \\
	p_{13} & p_{23} & p_{33}
\end{matrix}\right)$ is Lorentzian.}
\]
The nonnegativity of the determinant of the latter matrix gives the optimal bounding constant $f(\alpha^{12|3})=2$, which is witnessed by the Lorentzian matrices
\[
\left(
\begin{matrix}
	\epsilon & 1 & 1 \\
	1 & \epsilon & 1 \\
	1 & 1 & 1
\end{matrix}\right) \in \L_3^+ \ \ \text{for sufficiently small $\epsilon >0$.}
\]
Note that $0= \log f(\alpha^{12|3}+\alpha^{13|2})\le \log f(\alpha^{12|3})+\log f(\alpha^{13|2})=2 \log 2$.
In Theorem~\ref{mth:entropy}, we explicitly describe the function $f(\alpha)$ on $\BR(\L_3^+)$.

The inequalities for the primitive bounded ratios $\alpha^{ij|k}$
 appear in the work of Andr\'e Weil on the proof of the Riemann hypothesis for algebraic curves over finite fields, where they are referred to as the \emph{Castelnuovo--Severi inequality} \cite[Theorem 1.5]{Milne}.
This is the first special case of the \emph{reverse Khovanskii--Teissier inequality} \cite[Theorem 5.7]{Lehmann-Xiao}. The inequalities for $\alpha^{ij|k}$ also appear
in early works on Brunn--Minkowski theory; see \cite[Page 396]{Frobenius1915} and \cite[Section~51]{Bonnesen-Fenchel}.\footnote{We thank Ramon van Handel for pointing out these references.}
\end{example}

It turns out that there are no essentially new bounded ratios on $\L_4^+$:
\[
\BR(\L^+_4)=\operatorname{Cone}(\alpha^{ij|k})_{1 \le i,j,k \le 4}, \ \ \text{where $\alpha^{ij|k} \coloneq \mathbf{e}_{ij}+\mathbf{e}_{kk}-\mathbf{e}_{ik}-\mathbf{e}_{jk}$.}
\]
Exactly one new type of bounded ratios appear for $\L_5^+$, which we refer to as the \emph{pentagonal ratios}:
\[
\alpha^{ijk|lm}\coloneq \mathbf{e}_{ij}+\mathbf{e}_{ik}+\mathbf{e}_{jk}+\mathbf{e}_{ll}+\mathbf{e}_{lm}+\mathbf{e}_{mm}-\mathbf{e}_{il}-\mathbf{e}_{jl}-\mathbf{e}_{kl}-\mathbf{e}_{im}-\mathbf{e}_{jm}-\mathbf{e}_{km}.
\]

\renewcommand{\themainthm}{A}
 
\begin{mainthm}
\label{mth:pentagonal}
The \emph{pentagonal inequality} holds for any $(p_{ij}) \in \L_n$:
\[
p_{ij}p_{ik}p_{jk}p_{ll}p_{lm}p_{mm} \le 4 p_{il}p_{jl}p_{kl}p_{im}p_{jm}p_{km} \ \ \text{for any $i,j,k,l,m$.}
\]
The constant $4$ is optimal when all five indices are distinct, and
    \[
\BR(\L_5^+)=\operatorname{Cone}(\alpha^{ijk|lm})_{1 \le i,j,k,l,m \le 5}.
    \]
\end{mainthm}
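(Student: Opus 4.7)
The plan is to establish the three assertions in sequence: the pentagonal inequality, the sharpness of the constant $4$ in the non-degenerate case, and the cone description.

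For the inequality, I would first reduce to the case of five pairwise distinct indices via a \emph{duplication trick}. If some of $i,j,k,l,m$ coincide—say $i=j$—then the substitution $x_i \mapsto x_i + x_{n+1}$ in the Lorentzian quadratic $\sum p_{ab} x_a x_b$ is a nonnegative linear substitution and hence produces an $(n+1)\times(n+1)$ Lorentzian matrix $(\widetilde p_{ab})$ whose entries duplicate row and column $i$; applying the non-degenerate pentagonal inequality to $\widetilde p$ on indices where $i$ has been split recovers the degenerate inequality on $p$. Iterating, and then passing to the $5\times 5$ principal submatrix indexed by $\{i,j,k,l,m\}$ (which is itself Lorentzian), reduces the problem to $n=5$ with distinct indices. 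Following the Castelnuovo--Severi argument used for $\alpha^{ij|k}$, I would then push the extremal behavior onto the boundary of $\L_5^+$ where $p_{ii}=p_{jj}=p_{kk}=0$; on this degenerate matrix the signature condition ``at most one positive eigenvalue'' should translate, via a Schur-complement or minor identity organized around the block decomposition $\{i,j,k\}\sqcup\{l,m\}$, into the sought polynomial inequality, with the factor $4=2\cdot 2$ arising from two independent triangular-type constraints implicit in the $5\times 5$ structure.

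Sharpness of $4$ would be established by exhibiting an extremizing family in $\L_5^+$ modeled on the $3 \times 3$ triangular witness. A natural candidate is the Lorentzian matrix with $p_{ii}=\epsilon$ on the triangle-side diagonals and the remaining entries tuned to maintain signature $(1,4)$; sending $\epsilon\to 0$ should drive the ratio to $4$. Verifying Lorentzian-ness reduces to a clean eigenvalue computation on a matrix with a transparent $3+2$ block structure.

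For the cone description $\BR(\L_5^+)=\operatorname{Cone}(\alpha^{ijk|lm})$, I would invoke the main structural theorem~\ref{mth:BR} identifying $\BR(\L_n^+)$ with the dual of the cut cone $\Cut_n$. For $n=5$, a classical fact in the theory of the cut cone asserts that $\Cut_5$ has exactly two $S_5$-orbits of facets: the triangle inequalities and the hypermetric pentagonal inequalities with coefficient vector $(1,1,1,-1,-1)$. Under the duality these correspond to the extremal rays of $\BR(\L_5^+)$; the generators $\alpha^{ijk|lm}$ with five distinct indices realize the pentagonal rays directly, while their degenerations—for instance, setting $k=l$ collapses $\alpha^{ijl|lm}$ to the triangular ratio $\alpha^{ij|m}$—realize the triangular rays, so together they generate the entire cone.

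The main obstacle I expect is pinning down the sharp constant $4$. Although Theorem~\ref{mth:BR} produces the extremal rays of $\BR(\L_5^+)$ abstractly and guarantees that \emph{some} optimal bounding constant exists, extracting the exact value $4$ requires an analytic argument on the $5\times 5$ Lorentzian matrix. Crucially, because $\alpha^{ijk|lm}$ is extremal in $\BR(\L_5^+)$, it cannot be written as a nonnegative combination of triangular ratios, so the constant $4$ does not simply follow from the product of $f(\alpha^{ij|k})=2$ factors; the work lies in finding the correct minor identity or chain of Lorentzian contractions that extracts the factor $4$ directly from the signature constraint.
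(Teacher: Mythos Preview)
Your outline for the cone description $\BR(\L_5^+)=\operatorname{Cone}(\alpha^{ijk|lm})$ is correct and matches the paper: invoke Theorem~\ref{mth:BR} and the known facet structure of $\Cut_5$. Your reduction moves (set $p_{ii}=p_{jj}=p_{kk}=0$, then use the scaling action, then organize around the $\{i,j,k\}\sqcup\{l,m\}$ block) are also the ones the paper uses. The gap is in the core step, and you yourself flag it: you have no concrete mechanism for extracting the constant $4$.

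Two specific points. First, the heuristic ``$4=2\cdot 2$ from two independent triangular constraints'' is not how the constant arises; you correctly note that $\alpha^{ijk|lm}$ is extremal and so cannot be written as a nonnegative sum of triangular ratios, but then you revert to this heuristic anyway. In the paper the argument runs in two genuinely new steps after the reduction to $p_{11}=p_{22}=p_{33}=0$, $p_{12}=p_{13}=p_{23}=1$. Writing $A=BCB^T$ with $B$ unipotent lower-triangular and $C$ block-diagonal, the Lorentzian signature forces the $2\times 2$ block $\bigl(\begin{smallmatrix}c_{44}&c_{45}\\c_{45}&c_{55}\end{smallmatrix}\bigr)$ to be negative semidefinite, and a monotonicity lemma (Lemma~\ref{lem:lor_plus_nsd}) shows that the product $p_{44}p_{45}p_{55}$ only increases when this block is set to zero. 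This reduces to the rank-$3$ boundary, where $p_{44},p_{45},p_{55}$ become explicit quadratic polynomials $X,Y,Z$ in $p_{14},\ldots,p_{35}$, and the pentagonal inequality becomes the six-variable polynomial inequality $XYZ\le 4\,p_{14}p_{24}p_{34}p_{15}p_{25}p_{35}$ (Lemma~\ref{lem:hard_lemma}). That lemma is proved by a direct but delicate case analysis, and the constant $4$ (equivalently $32$ before clearing denominators) falls out of it---not from any product of triangular bounds. Your phrase ``Schur complement or minor identity'' gestures at the first of these two steps but misses the second entirely, and the second is where all the difficulty sits.

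Second, your sharpness witness is too vague to work as stated. ``$p_{ii}=\epsilon$ on the triangle-side diagonals with the remaining entries tuned'' does not determine a matrix, and a generic perturbation of this shape will not push the ratio to $4$. The paper exhibits an explicit one-parameter rank-$3$ family $M(t)$ in which the limit $t\to 0$ is engineered so that $p_{44}p_{55}\sim (4t)^2$ while $p_{14}p_{34}p_{15}p_{35}\sim 4t^2(2)^2$; finding such a family is guided by the equality analysis of the polynomial lemma above, not by the $3\times 3$ triangular witness.
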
 

Note that the triangular ratios appear as degenerate cases of the pentagonal ratios:
\[
\alpha^{ijk|kk} = \alpha^{ij|k} \ \ \text{for any $i,j,k$.}
\]
The Alexandrov--Fenchel type ratio $\alpha^{ij}$ appears as a degenerate triangular ratio $\alpha^{ii|j}$, which is bounded but not extremal in $\BR(\L_5^+)$.
The cone of bounded ratios $\BR(\L_5^+)$ has forty extremal rays, corresponding to the ten nondegenerate pentagonal ratios and the thirty nondegenerate triangular ratios.
We have a factorization of the pentagonal ratio  into a product of three  triangular ratios and one reciprocal:
\[
\frac{p_{12}p_{13}p_{23}p_{44}p_{45}p_{55}}{p_{14}p_{15}p_{24}p_{25}p_{34}p_{35}} = \frac{p_{12}p_{44}}{p_{14}p_{24}}\cdot\frac{p_{13}p_{55}}{p_{15}p_{35}}\cdot\frac{p_{22}p_{45}}{p_{24}p_{25}}\cdot\left(\frac{p_{22}p_{34}}{p_{23}p_{24}}\right)^{-1}.
\]
 Even though the reciprocal on the right-hand side is unbounded  on its own, the pentagonal ratio on the left-hand side is bounded on $\L_n^+$. Factorizations of this type are not unique.

In general, we show that $\BR(\L_n^+)$  is a finitely generated polyhedral cone whose extremal rays are generated by the primitive bounded ratios. Up to $S_n$-symmetry, the numbers of primitive bounded ratios for $n=3,4,5,6,7,8$ are $1$, $1$, $2$, $4$, $36$, and $2169$, respectively.
The two new types of primitive bounded ratios for $\L_6^+$ correspond to the inequalities of the form
\[
\frac{p_{11}p_{12}p_{12}p_{13}p_{13}p_{23}p_{44}p_{45}p_{46}p_{55}p_{56}p_{66}}{p_{14}p_{14}p_{15}p_{15}p_{16}p_{16}p_{24}p_{25}p_{26}p_{34}p_{35}p_{36}} \le c, \quad
\frac{p_{11}p_{11}p_{11}p_{12}p_{12}p_{22}p_{34}p_{35}p_{36}p_{45}p_{46}p_{56}}{p_{13}p_{13}p_{14}p_{14}p_{15}p_{15}p_{16}p_{16}p_{23}p_{24}p_{25}p_{26}} \le c,
\]
although we do not know the optimal bounding constants for these inequalities; see Conjecture~\ref{conj:intro-at-most-2}.  
To prove the general statement, we relate $\BR(\L_n^+)$ to the \emph{cut cone}, a finitely generated integral polyhedral cone extensively studied in metric geometry and graph theory. 
For a comprehensive introduction to the geometry and combinatorics of the cut cone, we refer to the book \cite{Deza-Laurent}.

\begin{definition}
For any subset $S \subseteq [n]$, define the \emph{cut vector} $\delta(S)=(\delta(S)_{ij})_{1 \le i < j \le n}$ by
\[
\delta(S)_{ij} = 
\begin{cases}
1, & \text{if $S$ contains exactly one of $i$ and $j$}, \\
0, & \text{if otherwise.}
\end{cases}
\]
The \emph{cut cone} $\Cut_n$ is the polyhedral cone in $\RR^{\binom{n}{2}}$ generated by all the cut vectors $\delta(S)$ for $S \subseteq [n]$.
\end{definition}

The extremal rays of $\Cut_n$ are generated by the cut vectors of nonempty proper subsets of $[n]$.
The cut cone can be identified with the space of semimetrics on $n$ points that are isometrically embeddable in $\ell_1$-space \cite[Section 1.1]{Deza-Laurent}. 

We consider the \emph{scaling action} of 
$(\RR_{>0})^n$ on $\Sym_n(\RR_{> 0})$ defined by the formula
\[
	(c_1,\dots, c_n)\cdot (p_{ij}) \coloneqq (c_ic_jp_{ij}).
    \]
If $X\subseteq \Sym_n(\RR_{>0})$ is stable under the scaling action, then the orbit space of $X$ can be identified with the set $\underline{X}$ of symmetric matrices in $X$ with diagonal entries $1$.
We say that such matrices in $\underline{X} \subseteq X$ are \emph{normalized}, and define the cone of  \emph{reduced bounded ratios} on $X$ by 
\[
\underline{\BR}(X)
\coloneqq
\left\{ 
(\alpha_{ij})_{1 \le i<j \le n} \;\middle|\; \text{there is $c>0$ such that  $\prod_{1\le i< j\le n}p_{ij}^{\alpha_{ij}}\le c$ for all $(p_{ij}) \in \underline{X}$}   \right\}.
\]
As before, the optimal bounding constants $\underline{f}(\alpha)$ for reduced bounded ratios are defined to be the infimum of all possible constants $c$ satisfying the displayed inequality for all $(p_{ij})$ in $\underline{X}$.
It is straightforward to check that $\underline{\BR}(X)$ is a convex cone in $\RR^{\binom{n}{2}}$ and $\underline{f}(\alpha)$ is a log-convex homogeneous function of degree $1$ on the cone of reduced bounded ratios.

If $\alpha$ is a bounded ratio on $X$, then the corresponding monomial $p^\alpha$
must be invariant under the scaling action of $(\RR_{>0})^n$. 
In other words, if $\alpha\in\BR(X)$, then 
\[
2\alpha_{ii}=-\sum_{j\neq i}\alpha_{ij} \ \ \text{for all $i$}.
\]
Therefore, the projection $\pi: \RR^{\binom{n+1}{2}} \to \RR^{\binom{n}{2}}$ that omits the diagonal entries restricts to a bijection between $\BR(X)$ and $\underline{\BR}(X)$. Moreover,  we have $f(\alpha)=\underline{f}(\pi(\alpha))$ for any bounded ratio $\alpha$.

\renewcommand{\themainthm}{B}
\begin{mainthm}
\label{mth:BR}
	The cone of reduced bounded ratios on $\L_n^+$ is 
     dual of the cut cone:
	\[
    \underline{\BR}(\L_n^+)=\left\{(\alpha_{ij})_{1 \le i < j\le n} \;\middle|\; \sum_{1 \le i<j \le n} \alpha_{ij} \beta_{ij} \le 0 \ \ \text{for all $(\beta_{ij})_{1 \le i<j \le n} \in \Cut_n$} \right\} \subseteq \RR^{\binom{n}{2}}.
    \]
In particular, the cone of bounded ratios on $\L_n^+$ is a  finitely generated integral polyhedral cone.
\end{mainthm}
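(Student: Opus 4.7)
The plan is to establish the two inclusions separately: necessity, via an explicit one-parameter family of Lorentzian matrices that witnesses each cut, and sufficiency, via a uniform $\ell_1$-approximation of normalized Lorentzian matrices.

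First I would tackle the inclusion of $\underline{\BR}(\L_n^+)$ in the dual of $\Cut_n$. Since $\Cut_n$ is generated by the cut vectors $\delta(S)$, it suffices to show that $\sum_{i<j}\alpha_{ij}\delta(S)_{ij} \le 0$ for each $\alpha \in \underline{\BR}(\L_n^+)$ and each $S\subseteq[n]$. For this I would introduce the one-parameter family $P(t) \in \Sym_n(\RR_{>0})$ defined by $P(t)_{ij} = e^{t\delta(S)_{ij}}$. Writing
\[
P(t) = \chi_S\chi_S^{\top} + \chi_{S^c}\chi_{S^c}^{\top} + e^{t}(\chi_S\chi_{S^c}^{\top} + \chi_{S^c}\chi_S^{\top})
\]
exhibits $P(t)$ as a rank-two symmetric matrix whose restriction to $\operatorname{span}(\chi_S,\chi_{S^c})$ has determinant $|S|(n-|S|)(1-e^{2t}) \le 0$, so $P(t)$ has exactly one positive eigenvalue and lies in $\underline{\L_n^+}$. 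Since $\prod_{i<j} P(t)_{ij}^{\alpha_{ij}} = \exp\bigl(t\sum_{i<j}\alpha_{ij}\delta(S)_{ij}\bigr)$, the hypothesis that $\alpha$ is a bounded ratio forces $\sum_{i<j}\alpha_{ij}\delta(S)_{ij} \le 0$ upon letting $t\to\infty$.

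For the reverse inclusion, the crux is a uniform approximation lemma that I would formulate as follows: there exists a constant $C_n$ such that for every $p \in \underline{\L_n^+}$ one can find nonnegative weights $\lambda_S = \lambda_S(p)$ with
\[
\Bigl|\log p_{ij}\;-\;\sum_{S\subseteq[n]}\lambda_S\,\delta(S)_{ij}\Bigr|\;\le\;C_n \qquad\text{for all } 1\le i<j\le n.
\]
Granted this, the desired bound follows by a direct pairing: if $\sum_{i<j}\alpha_{ij}\delta(S)_{ij}\le 0$ for every cut $S$, then setting $\varepsilon_{ij}:=\log p_{ij} - \sum_S\lambda_S\delta(S)_{ij}$ gives
\[
\sum_{i<j}\alpha_{ij}\log p_{ij} \;=\; \sum_S \lambda_S\Bigl(\sum_{i<j}\alpha_{ij}\delta(S)_{ij}\Bigr) + \sum_{i<j}\alpha_{ij}\,\varepsilon_{ij} \;\le\; C_n\|\alpha\|_1,
\]
so $\prod p_{ij}^{\alpha_{ij}}$ is uniformly bounded on $\underline{\L_n^+}$ and $\alpha \in \underline{\BR}(\L_n^+)$.

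The hard part will be proving the approximation lemma. The normalized triangular inequality $p_{ij}\le 2p_{ik}p_{jk}$ (a special case of the reverse Khovanskii--Teissier inequality) and the pentagonal inequality from Theorem~\ref{mth:pentagonal} translate, after taking logs of $d_{ij}:=\log p_{ij}$, into metric-like inequalities that make $d$ an approximate $\delta$-hyperbolic pseudometric on $[n]$ with $\delta$ bounded by a universal constant. Gromov's tree-approximation theorem then supplies a tree pseudometric $d'$ on $[n]$ with $\|d-d'\|_{\infty} = O(\delta\log n)$; since finite tree metrics are extremal $\ell_1$-metrics, $d'$ admits a nonnegative cut decomposition $d'=\sum_S\lambda_S\delta(S)$, yielding the approximation with $C_n = O(\log n)$. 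The final assertion that $\underline{\BR}(\L_n^+)$ is a finitely generated integral polyhedral cone is then immediate: $\Cut_n$ is such a cone by definition, and polar duality preserves these properties.
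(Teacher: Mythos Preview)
Your overall architecture is the paper's: exponentiated cut vectors for the easy inclusion, and Gromov's tree approximation theorem for the hard one. The necessity argument is fine; your matrices $P(t)$ are precisely the exponentiated star-tree metrics the paper uses, and your direct rank-two eigenvalue computation is a clean alternative to the paper's route through $\Delta_n^+(\T_0)\subseteq\L_n^+$.

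The gap is in how you propose to establish $\delta$-hyperbolicity of $d_{ij}=\log p_{ij}$. The triangular inequality $d_{ij}\le d_{ik}+d_{jk}+\log 2$ is a three-point condition and by itself says nothing about hyperbolicity---every genuine metric satisfies it exactly, and most metrics are not hyperbolic. The pentagonal inequality from Theorem~\ref{mth:pentagonal} is a five-point condition; after normalizing and taking logarithms it reads
\[
d_{ij}+d_{ik}+d_{jk}+d_{lm}\;\le\;d_{il}+d_{jl}+d_{kl}+d_{im}+d_{jm}+d_{km}+\log 4,
\]
and no specialization of indices in this produces the Gromov four-point condition $d_{ij}+d_{kl}\le\max(d_{ik}+d_{jl},\,d_{il}+d_{jk})+2\delta$. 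What the argument actually requires is the four-point inequality
\[
\sqrt{p_{ij}p_{kl}}\;\le\;\sqrt{p_{ik}p_{jl}}+\sqrt{p_{il}p_{jk}}\qquad\text{for all }i,j,k,l,
\]
i.e.\ the inclusion $\L_n\subseteq\Delta_n(\T_2)$. The paper proves this (Proposition~\ref{prop:inclusions}) by zeroing the diagonal of the relevant $4\times4$ principal submatrix and recognizing the Heron-type factorization of its determinant. With that in hand, your approximation lemma and the remainder of the argument go through exactly as you wrote. Note also a mild logical hazard in invoking Theorem~\ref{mth:pentagonal}: its structural assertion that $\BR(\L_5^+)$ is generated by the pentagonal ratios already depends on the present theorem, so you would want to cite only the bare inequality, which is proved independently.
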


Thus, the primitive bounded ratios on $\L_n^+$ is in bijection with the facets of $\Cut_n$. Karp and Papadimitriou showed in  \cite{Karp-Papadimitriou}  that  there is no computationally tractable description of all facets of the cut cone  unless $\mathrm{NP}=\mathrm{coNP}$.
Consequently, obtaining a complete and explicit description of the primitive bounded ratios on $\L_n^+$ is likely to be a challenging problem.

Nevertheless, several interesting classes of facets of $\Cut_n$ are known, the first among which is given by  the \emph{hypermetric inequalities} \cite[Chapter 28]{Deza-Laurent}: For any vector $h=(h_i)_{1 \le i \le n}$ satisfying $\sum_{i=1}^n h_i=1$, we have
\[
\sum_{1 \le i < j \le n} h_ih_j \beta_{ij} \le 0 \ \ \text{for all $(\beta_{ij})_{1 \le i<j \le n}$ in $\Cut_n$.}
\]
Thus, we may deduce the following statement from Theorem~\ref{mth:BR}.

\begin{corollary}
If  $\sum_{i=1}^n h_i=1$, the vector $(h_ih_j)_{1 \le i<j\le n}$ is a reduced bounded ratio on $\L_n^+$.
\end{corollary}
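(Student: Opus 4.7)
The plan is to translate the statement across the duality of Theorem~\ref{mth:BR} and reduce it to a direct computation on the generators of the cut cone. By that theorem, $\underline{\BR}(\L_n^+)$ is the dual cone of $\Cut_n$, and since $\Cut_n$ is generated by the cut vectors $\delta(S)$ for $S \subseteq [n]$, showing that $\gamma \coloneqq (h_ih_j)_{1 \le i<j\le n}$ lies in $\underline{\BR}(\L_n^+)$ reduces to verifying that $\gamma$ pairs nonpositively with each $\delta(S)$.

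For a fixed subset $S$, a direct expansion using $\sum_{i=1}^n h_i = 1$ yields
\[
\sum_{1 \le i<j\le n} h_i h_j \, \delta(S)_{ij} = \sum_{i \in S,\, j \notin S} h_i h_j = \Bigl(\sum_{i \in S} h_i\Bigr)\Bigl(\sum_{j \notin S} h_j\Bigr) = a(1-a),
\]
where $a \coloneqq \sum_{i \in S} h_i$. This is precisely the hypermetric inequality evaluated at the $\ell_1$-semimetric $\delta(S)$, and in the standard hypermetric setting the integer $a$ satisfies $a \le 0$ or $a \ge 1$, so $a(1-a) \le 0$.

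Combining these observations with Theorem~\ref{mth:BR} finishes the argument. I do not anticipate any real obstacle: once the duality of Theorem~\ref{mth:BR} is in hand, the corollary is a direct dualization of the hypermetric inequality, and the only conceptual step is recognizing that the pairing of $(h_ih_j)$ against a cut vector collapses to the classical expression $a(1-a)$.
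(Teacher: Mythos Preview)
Your proposal is correct and follows the paper's approach: both invoke Theorem~\ref{mth:BR} to reduce membership in $\underline{\BR}(\L_n^+)$ to the hypermetric inequality on $\Cut_n$, which the paper simply cites from \cite{Deza-Laurent} while you verify it directly on the generating cut vectors via the identity $\sum_{i<j} h_ih_j\,\delta(S)_{ij}=a(1-a)$. Note that your argument (and the corollary itself) tacitly requires the $h_i$ to be integers so that $a=\sum_{i\in S}h_i\in\ZZ$; this is implicit in the hypermetric setting you invoke, but you may wish to state it explicitly since the corollary as phrased does not.
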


Bounded ratios of this type are called \emph{hypermetric}. Determining the optimal bounding constants for hypermetric bounded ratios as a function of $h$ is an interesting and challenging problem.
The hypermetric inequality for $h=\mathbf{e}_1+\mathbf{e}_2-\mathbf{e}_3$ gives the triangular ratio $\alpha^{12|3}$, and the hypermetric inequality for $h=\mathbf{e}_1+\mathbf{e}_2+\mathbf{e}_3-\mathbf{e}_4-\mathbf{e}_5$ gives the pentagonal ratio $\alpha^{123|45}$.
Up to $S_n$-symmetry, two additional hypermetric inequalities are needed to describe all facets of $\Cut_6$ \cite[Remark 15.2.11]{Deza-Laurent}. These hypermetric inequalities correspond to 
\[
h=2\mathbf{e}_1+\mathbf{e}_2+\mathbf{e}_3-\mathbf{e}_4-\mathbf{e}_5-\mathbf{e}_6  \ \ \text{and} \ \ 
h=-2\mathbf{e}_1-\mathbf{e}_2+\mathbf{e}_3+\mathbf{e}_4+\mathbf{e}_5+\mathbf{e}_6.
\]
For $n \ge 7$, the cut cone has facets that do not correspond to any hypermetric inequality; see \cite[Section 30.6]{Deza-Laurent} for a complete list of facets of $\Cut_7$. 
For a comprehensive discussion of the facets of the cut cone, we refer the reader to Deza and Laurent’s monograph \cite[Part V]{Deza-Laurent}.

We begin by outlining the proof techniques for our main theorem and situating them within a broader context. Br\"and\'en and the second author introduced \emph{Lorentzian polynomials} \cite{Branden-Huh}, a class of homogeneous polynomials that unify and extend various notions of log-concavity arising in combinatorics and geometry. A quadratic Lorentzian polynomial is, by definition, a quadratic form whose Hessian matrix is Lorentzian. 
 Meanwhile, Baker and Bowler developed a unifying algebraic framework for \emph{matroids over tracts} \cite{Baker-Bowler}, which simultaneously generalized many variants of matroids. Building on this framework, the authors of  \cite{BHKL-triangular} identified the space of Lorentzian polynomials with given support with the corresponding thin Schubert cell in the Grassmannian over the \emph{triangular hyperfield}, up to homeomorphism. This enabled the authors to use Gromov's theorem on $\delta$-hyperbolic spaces in \cite{BHKL-triangular2} to derive new results on Lorentzian polynomials. Our proof is inspired by this perspective, which we now explain in concrete terms.

 Let $\Sym_n(\RR_{ \ge 0})$ denote the set of $n \times n$ symmetric matrices with nonnegative entries.
 
\begin{definition}
Let $\Delta_n(\T_0)$ be the set of matrices $(p_{ij}) \in \Sym_n(\RR_{ \ge 0})$ such that
\[
\text{the maximum among $p_{ij}p_{kl}, p_{ik}p_{jl}, p_{il}p_{jk}$ is achieved at least twice for any $i,j,k,l \in [n]$.}
\]
For a positive real number $p$, let $\Delta_n(\T_p)$ be the set of matrices $(p_{ij}) \in \Sym_n(\RR_{ \ge 0})$ such that
\[
\text{
$(p_{ij}p_{kl})^{1/p} \le
 (p_{ik}p_{jl})^{1/p}+
(p_{il}p_{jk})^{1/p}$  for any $i,j,k,l \in [n]$.}
\]
For any nonnegative real number $p$, let $\Delta^+_n(\T_p)$ denote the intersection $\Delta_n(\T_p) \cap \Sym_n(\RR_{> 0})$.
\end{definition}


Note that $\Delta^+_n(\T_p)$ is invariant under the scaling action of $(\RR_{>0})^n$. As before, we identify the orbit space with the set $\underline{\Delta}^+_n(\T_p)$ of matrices in $\Delta^+_n(\T_p)$ with diagonal entries $1$.
A key observation from \cite{BHKL-triangular} is that 
\[
 \underline{\Delta}_n^+(\T_0)\subseteq  \underline{\L}_n^+\subseteq  \underline{\Delta}_n^+(\T_2),
\]
where $\underline{\L}_n^+$ is the set of $n \times n$  Lorentzian matrices with diagonal entries $1$ and off-diagonal entries positive.
By taking the coordinatewise logarithm and omitting the diagonal coordinates, we have
\[
 \log \underline{\Delta}_n^+(\T_0)\subseteq  \log \underline{\L}_n^+\subseteq  \log \underline{\Delta}_n^+(\T_2) \ \ \text{in} \ \  \RR^{\binom{n}{2}}.
\]
We observe in Section~\ref{sec:hyperbolic} that  $\log \underline{\Delta}_n^+(\T_2)$ parametrizes Gromov's $\delta$-hyperbolic metrics on $n$ points for $\delta= \log 2$, while $ \log \underline{\Delta}_n^+(\T_0)$ parametrizes $0$-hyperbolic metrics on $n$ points, which are precisely the tree metrics. It follows from Gromov's tree approximation theorem \cite[Section 6.1]{Gromov} 
that the two spaces 
are within finite Hausdorff distance of one another. Consequently, 
\[
\underline{\BR}(\Delta_n^+(\T_0))=\underline{\BR}(\L_n^+)=\underline{\BR}(\Delta_n^+(\T_p)).
\]
Since $\Cut_n$ is the convex hull of the space of tree metrics on $n$ points, we conclude that $\underline{\BR}(\L_n^+)$ and $\Cut_n$ are dual to each other; see Section~\ref{sec:hyperbolic} for further details.

Having understood the cone of bounded ratios for Lorentzian matrices, it is natural  to ask how the optimal bounding constants $f(\alpha)$ behave as a function on this cone. As mentioned before,
\[
\log f(\alpha+\beta) \le \log f(\alpha)+\log f(\beta) \ \ \text{and} \ \ \log f(c \alpha) = c \log f(\alpha).
\]
In Theorem~\ref{mth:entropy} below, we give an explicit description of the  function $f(\alpha)$ when $n=3$.

Recall that the cone of bounded ratios $\BR(\L_3^+)$ has three extremal rays 
$
\alpha^{23|1}, \alpha^{13|2}, \alpha^{12|3}
$
corresponding to the optimal inequalities
\[
\frac{p_{23}p_{11}}{p_{12}p_{13}} \le 2, \quad \frac{p_{13}p_{22}}{p_{12}p_{23}} \le 2,
\quad \frac{p_{12}p_{33}}{p_{13}p_{23}} \le 2.
\]
Since $\log f$ is a homogeneous function of degree $1$, it is enough to determine 
\[
f(a,b,c)\coloneq f(a \cdot \alpha^{23|1}+b \cdot \alpha^{13|2}+ c \cdot \alpha^{12|3}) \ \ \text{when  $a+b+c=1$ and $a,b,c \ge 0$.}
\]

\renewcommand{\themainthm}{C}
\begin{mainthm}
\label{mth:entropy}
Let $a,b,c$ be nonnegative numbers such that $a+b+c=1$.
\begin{enumerate}[(1)]\itemsep 5pt
\item If $a^2+b^2+c^2-2ab-2ac-2bc\le 0$, then $ f(a,b,c)=1$.
\item If $a^2+b^2+c^2-2ab-2ac-2bc\ge 0$, $a \ge b$,  $a \ge c$, then
\[
f(a,b,c)=2\cdot a^a\cdot b^b \cdot c^c \cdot (2a-1)^{2a-1} \cdot (1-2b)^{2b-1}\cdot (1-2c)^{2c-1}.
\]
\end{enumerate}
See Figure~\ref{fig:triangle} for an illustration.
\end{mainthm}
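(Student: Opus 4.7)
The plan is to reduce the optimization to a two-variable calculus problem via a hyperbolic parametrization of the Lorentzian boundary. Using the scaling invariance, I normalize so that $p_{11}=p_{22}=p_{33}=1$ and write $(x,y,z) := (p_{12},p_{13},p_{23})$; the problem becomes
\[
\underline{f}(a,b,c) = \sup_{(x,y,z) \in \underline{\L}_3^+}\, x^{2c-1} y^{2b-1} z^{2a-1},
\]
where the feasible region is $x,y,z \ge 1$ together with the determinantal condition $1+2xyz - x^2-y^2-z^2 \ge 0$.  I first argue that the supremum is attained on the surface $\det=0$: there are no interior critical points (the partial derivatives of $\log g$ are $(2c-1)/x$, $(2b-1)/y$, $(2a-1)/z$, which cannot simultaneously vanish under $a+b+c=1$), and on each wall $x=1$, $y=1$, $z=1$ the Lorentzian constraint forces the remaining two off-diagonal entries to be equal, reducing $g$ to $t^{-2c}$, $t^{-2b}$, or $t^{-2a}$ on $t \ge 1$, each bounded by $1$.

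Next I introduce the substitution $x=\cosh u$, $y=\cosh v$, $z=\cosh w$ with $u,v,w \ge 0$. A direct computation using $\cosh(u+v)=\cosh u\cosh v+\sinh u\sinh v$ shows that the determinantal constraint corresponds exactly to the triangle inequalities on $(u,v,w)$; the surface $\det=0$ decomposes into three branches $w=u+v$, $u=v+w$, $v=u+w$. Assuming without loss of generality that $a \ge b$ and $a \ge c$ (so that the exponent $A:=2a-1$ of $z$ is the largest) and using the symmetry $f(a,b,c)=f(a,c,b)$, the relevant branch is $z=\cosh(u+v)$, $x=\cosh u$, $y=\cosh v$. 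On this branch the problem becomes to maximize
\[
F(u,v) = C\log\cosh u + B\log\cosh v + A\log\cosh(u+v), \quad u,v\ge 0,
\]
with $B:=2b-1$ and $C:=2c-1$.

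The critical-point equations $C\tanh u + A\tanh(u+v)=0$ and $B\tanh v + A\tanh(u+v)=0$ can be solved explicitly using $A+B+C=-1$, yielding
\[
\cosh u = \frac{1-2c}{2\sqrt{ab}}, \quad \cosh v = \frac{1-2b}{2\sqrt{ac}}, \quad \cosh(u+v) = \frac{2a-1}{2\sqrt{bc}}.
\]
Substituting into $F$ and collecting the contributions of $\log 2$, $\log a,\log b,\log c$, and $\log(2a-1),\log(1-2b),\log(1-2c)$ (using $a+b+c=1$) gives exactly the asserted closed form. The critical point lies in the feasible region $\cosh u,\cosh v \ge 1$ iff $(1-2c)^2\ge 4ab$ and $(1-2b)^2\ge 4ac$; expanding each with $a+b+c=1$ shows both are equivalent to $a^2+b^2+c^2-2ab-2ac-2bc\ge 0$, which is Case~(2). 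The resulting $(x,y,z)$ is realized by an honest matrix in $\L_3^+$ (positive entries, $\det=0$), so the supremum is attained.

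In Case~(1), the critical point on every branch fails to be feasible, so the supremum of $F$ on each branch is pushed onto its boundary $u=0$ or $v=0$, which corresponds to a wall $x=1$ or $y=1$ and hence yields $g\le 1$; combined with the obvious equality $g(1,1,1)=1$ (the all-ones matrix lies in $\L_3^+$), we conclude $f=1$. The main obstacle is verifying that the interior critical point of $F$ is the global maximum on the branch (and is not beaten by one of the other two branches of $\det=0$). A Hessian check at the critical point, together with the observation that $F \le 0$ on the boundary $\{u=0\}\cup\{v=0\}$ and the $S_3$-symmetry choice fixed by $a = \max(a,b,c)$, will close this gap.
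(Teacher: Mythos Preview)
Your approach is essentially the paper's: normalize the diagonal, rule out interior critical points, and optimize on the determinantal boundary $1+2xyz-x^2-y^2-z^2=0$. Your hyperbolic parametrization $x=\cosh u$, $y=\cosh v$, $z=\cosh w$ is a cosmetic variant of the paper's substitution---the paper solves the constraint as $p_{23}=p_{12}p_{13}+\sqrt{(p_{12}^2-1)(p_{13}^2-1)}$ and sets $(x,y)=(p_{12}^{-2},p_{13}^{-2})$, which is exactly your branch $w=u+v$ in different coordinates, and the resulting critical point matches (e.g.\ $p_{12}^{-2}=4ab/(a+b-c)^2$ is your $\cosh u=(1-2c)/(2\sqrt{ab})$). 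Your treatment of the walls $p_{ij}=1$ and the identification of the three branches via the triangle inequality on $(u,v,w)$ is more explicit than the paper's, which is a plus.

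The one substantive difference is Case~(1). You push the optimum to a wall by arguing the critical point is infeasible; this works, but you then need the asymptotic check $F\sim -2bu-2cv\to -\infty$ to rule out escape to infinity, and some care when $b$ or $c$ vanishes. The paper instead uses a global shortcut: once the formula in Case~(2) gives $f=1$ on the boundary circle $a^2+b^2+c^2-2ab-2ac-2bc=0$, the log-convexity of $f$ on $\BR(\L_3^+)$ forces $f\le 1$ throughout the disk, and the all-ones matrix witnesses $f\ge 1$. This avoids any branch-by-branch or boundary analysis in Case~(1) and also closes the ``global maximum'' gap you flag, since on the compact square $[0,1]^2$ the paper only needs to compare the interior critical value against the boundary values $r\le 1$.
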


\begin{figure}[htbp]
    \centering
    \includegraphics[width=0.6\linewidth]{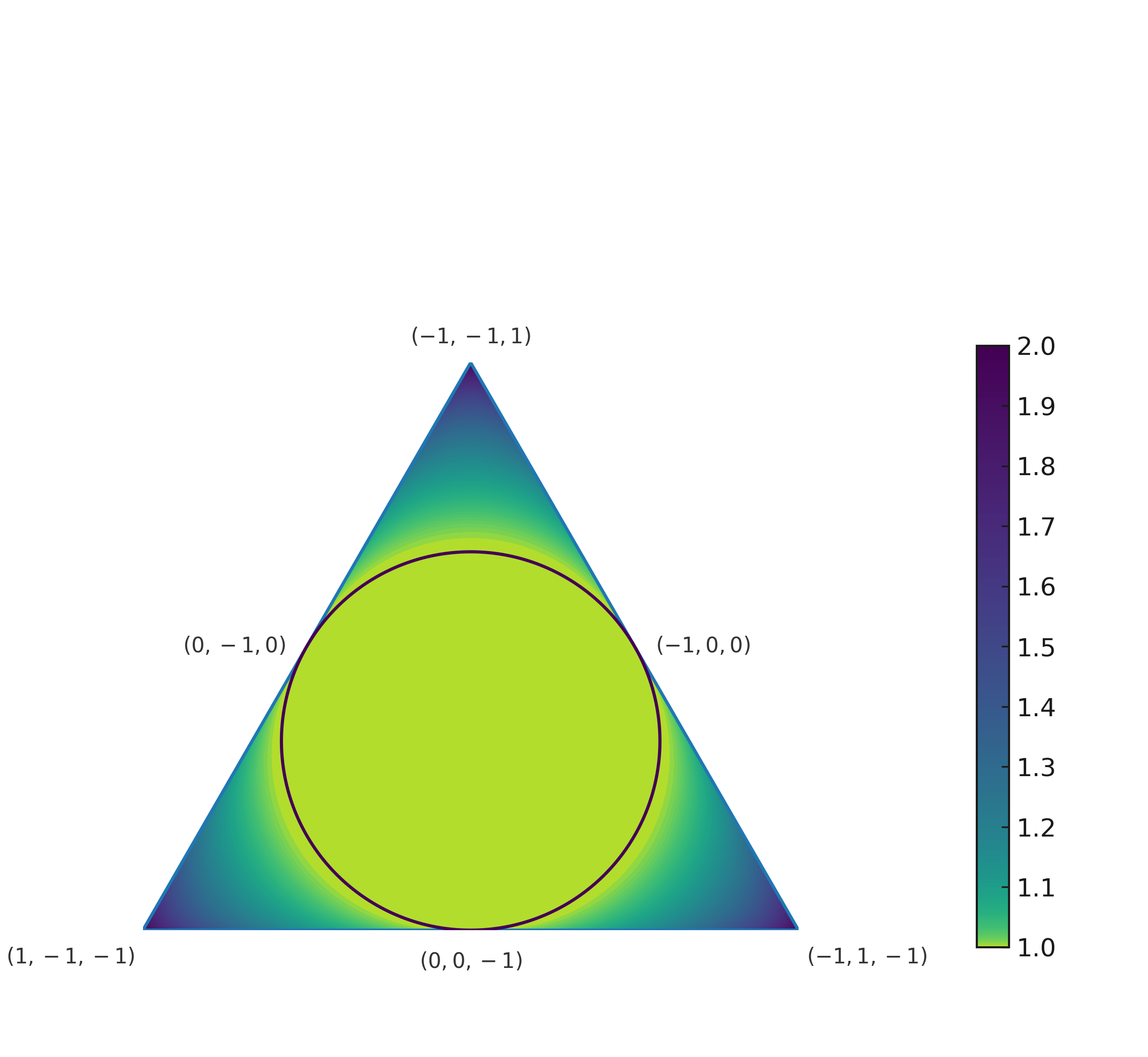}
    \caption{Optimal bounding constants for an equilateral triangle cross section of $\underline{\BR}(\L_3^+)$. The midpoints of the three edges correspond to the Alexandrov--Fenchel type inequalities. Inside the inscribed circle, the optimal bounding constant is $1$. Outside of the inscribed circle, the optimal bounding  constants are given by an entropy-like function.}
    \label{fig:triangle}
\end{figure}

We conclude with a general conjecture on the optimal bounding constant $f(\alpha)$.

\begin{definition}
We say that a reduced bounded ratio on $\L_n^+$ is \emph{normalized} if the sum of its coordinate is $-1$.
\end{definition}

For example, reduced bounded ratios of the form  $\pi(\alpha^{ij|k})=\mathbf{e}_{ij}-\mathbf{e}_{ik}-\mathbf{e}_{jk}$ are normalized. 
Since the set of Lorentzian matrices is preserved when a diagonal entry is replaced by $0$,
the coordinate sum of any nonzero reduced bounded ratio is negative. 
Thus, any nonzero reduced bounded ratio is uniquely a positive multiple of a normalized reduced bounded ratio.

\begin{conjecture}
\label{conj:intro-at-most-2}
	The optimal bounding constant for any normalized reduced bounded ratio on $\L_n^+$ is at most 2.
\end{conjecture}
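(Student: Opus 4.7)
The plan is to use the log-convexity of $\underline{f}$ to reduce the conjecture to a finite family of bounds for primitive integer bounded ratios, and then to attack those bounds using the facet structure of $\Cut_n$ and the Gromov-hyperbolic framework from Section~\ref{sec:hyperbolic}. By Theorem~\ref{mth:BR}, $\underline{\BR}(\L_n^+)$ is a finitely generated integral polyhedral cone; let $\beta_1,\dots,\beta_m$ denote its primitive integer generators and set $s_i \coloneq -\sum_{j<k}(\beta_i)_{jk}$, each a positive integer by the discussion preceding the conjecture. The rescaled ratios $\tilde\beta_i \coloneq \beta_i/s_i$ are exactly the vertices of the polytope of normalized reduced bounded ratios, so every normalized $\alpha$ admits a convex combination $\alpha = \sum_i \lambda_i\, \tilde\beta_i$. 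Since $\log \underline{f}$ is convex and positively homogeneous of degree one,
\[
\log \underline{f}(\alpha) \;\le\; \sum_i \lambda_i \log \underline{f}(\tilde\beta_i) \;=\; \sum_i \lambda_i\, \frac{\log \underline{f}(\beta_i)}{s_i}.
\]
Hence Conjecture~\ref{conj:intro-at-most-2} is equivalent to the integer inequality $\underline{f}(\beta_i) \le 2^{s_i}$ for every primitive $\beta_i$.

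The second step is to verify $\underline{f}(\beta_i) \le 2^{s_i}$ for each primitive, using the bijection between primitives and facets of $\Cut_n$ supplied by Theorem~\ref{mth:BR}. The triangular ratios have $s=1$ and $\underline{f}=2$, and by Theorem~\ref{mth:pentagonal} the pentagonal ratios have $s=2$ and $\underline{f}=4$, so equality holds in both known cases. For hypermetric primitives $(h_ih_j)_{i<j}$ with $h \in \ZZ^n$ and $\sum_i h_i = 1$, one computes $s = (\sum_i h_i^2 - 1)/2$, so the conjecture predicts the clean inequality $\underline{f}((h_ih_j)_{i<j}) \le 2^{(\sum_i h_i^2 - 1)/2}$. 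A natural approach is induction on $\sum_i h_i^2$, decomposing each hypermetric ratio into triangular and pentagonal pieces in the spirit of the factorization displayed after Theorem~\ref{mth:pentagonal}, and tracking the accumulated constants carefully enough to absorb the reciprocal factors that unavoidably appear in such a decomposition.

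The hardest step is the treatment of non-hypermetric facets of $\Cut_n$, which first appear at $n=7$ and for which, as noted after Theorem~\ref{mth:BR}, no tractable uniform description is available. My plan here is to work directly in the Gromov-hyperbolic framework from Section~\ref{sec:hyperbolic}: since $\log \underline{\L}_n^+$ lies inside the $\log 2$-hyperbolic space $\log \underline{\Delta}_n^+(\T_2)$, one would like, for each $(p_{ij}) \in \underline{\L}_n^+$, to produce a tree metric $d'$ on $[n]$ satisfying
\[
\sum_{i<j} \alpha_{ij}\bigl(-\log p_{ij}\bigr) \;\ge\; \sum_{i<j} \alpha_{ij}\, d'_{ij} \;-\; \log 2
\]
for every normalized $\alpha$ in the dual of $\Cut_n$; since the right-hand side is then at least $-\log 2$ by cut-cone duality, this would close the argument. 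I expect this to be the main analytic obstacle, because the standard tree-approximation error from \cite[Section 6.1]{Gromov} grows like $(\log n)\log 2$, so obtaining the $n$-independent constant $\log 2$ seems to require a refinement that genuinely exploits the normalization $\sum_{i<j}\alpha_{ij} = -1$ and the duality with $\Cut_n$, rather than only an $\ell_1$-type bound on the coefficients of $\alpha$.
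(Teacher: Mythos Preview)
The statement you are attempting is a \emph{conjecture} in the paper, not a theorem: the paper does not prove it. What the paper does establish is exactly your first step---the reduction via log-convexity and homogeneity to the inequality $\underline{f}(\beta)\le 2^{s}$ for each primitive integral bounded ratio $\beta$ with $s=-\sum_{i<j}\beta_{ij}$---together with the verification for $n\le 5$ via Theorem~\ref{mth:pentagonal}. Your Step~1 reproduces this reduction correctly, and your computations for the triangular and pentagonal cases match the paper's.

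Beyond that point, however, your proposal does not constitute a proof, and the gaps are substantive rather than cosmetic. In Step~2 you suggest an induction on $\sum_i h_i^2$ for hypermetric primitives by decomposing into triangular and pentagonal pieces ``in the spirit of'' the factorization after Theorem~\ref{mth:pentagonal}, but that very factorization shows why this fails as stated: such decompositions necessarily involve reciprocal (unbounded) factors, and you give no mechanism for controlling their contribution. The paper explicitly notes that it does not know the optimal constants even for the two new hypermetric primitives appearing at $n=6$, so this case is genuinely open. In Step~3 you yourself identify the obstruction: Gromov's tree approximation gives an error of order $(\log n)\log 2$, not $\log 2$, so the inequality you want does not follow from the $\delta$-hyperbolic framework of Section~\ref{sec:hyperbolic} without a new idea. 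The normalization $\sum_{i<j}\alpha_{ij}=-1$ alone cannot rescue this, since the approximation error is pointwise in the metric coordinates and the linear functional $\alpha$ can have large $\ell_1$-norm even when its coordinate sum is $-1$.

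In short: your reduction is correct and coincides with the paper's, but neither you nor the paper proves the conjecture, and the two approaches you sketch for the remaining primitives both have concrete obstructions that you have not overcome.
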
 

Using log-convexity of the optimal bounding constants, 
Conjecture~\ref{conj:intro-at-most-2} can be reduced to the case of extremal bounded ratios. Thus,
by Theorem~\ref{mth:pentagonal}, we know that Conjecture \ref{conj:intro-at-most-2} holds for $n\leq 5$.
See Section~\ref{sec:rank2} for a closely related conjecture for rank $2$ Lorentzian matrices.


\subsection*{Acknowledgements}
The authors gratefully acknowledge the Institute for Advanced Study for providing an inspiring research environment and thank Matt Baker, Chayim Lowen, and Ramon van Handel for insightful and stimulating discussions.
Daoji Huang is  supported by the Charles Simonyi Endowment and NSF-DMS2202900.
June Huh is partially supported by the Simons Investigator Grant.
 Botong Wang is partially supported by the NSF grant DMS-1926686.

 \section{The pentagonal inequality}

We prove Theorem~\ref{mth:pentagonal}.
The primary difficulty in proving this theorem lies in determining the optimal bounding constant for the pentagonal inequality. The statement that $\BR(\L_5^+)$ is generated by the pentagonal ratios follows from Theorem~\ref{mth:BR} and the fact that the facets of $\Cut_5$ are defined by the pentagonal ratios \cite[Section 30.6]{Deza-Laurent}.

We begin by observing that the constant $4$ in the pentagonal inequality is, if valid, best possible.
Let $t$ be a nonnegative real number, and consider the matrix
\[
M(t)\coloneq
\begin{pmatrix}
0 & 1 & 1 & t & 2 + t \\[1pt]
1 & 0 & 1 & 2 & 2 \\[1pt]
1 & 1 & 0 & 2 + t & t \\[1pt]
t & 2 & 2 + t & 4t & 4 + 4t \\[1pt]
2 + t & 2 & t & 4 + 4t & 4t
\end{pmatrix}.
\]
It is straightforward to check that $M(t)$ has rank $3$ for $t \ge 0$.
Since the leading $3 \times 3$ principal submatrix of $M$ is nondegenerate and Lorentzian, we see that $M(t)$ is Lorentzian for any $t \ge 0$. We have
\[
\lim_{t \to 0} \frac{p_{12}p_{13}p_{23}p_{44}p_{45}p_{55}}{p_{14}p_{15}p_{24}p_{25}p_{34}p_{35}}=\lim_{t \to 0} \frac{(4t)^2(4+4t)}{4t^2(2+t)^2}=4. 
\]
Since any Lorentzian matrix is a limit of Lorentzian matrices with positive entries \cite[Section~2]{Branden-Huh},
there is a family in $\L_5^+$
on which the evaluation of the pentagonal ratio limits to $4$.

We prepare the proof of the pentagonal inequality with a few auxiliary lemmas.

\begin{lemma}\label{lem:factors}
Suppose that $A$ is a symmetric matrix of the form
\[
A=\begin{pmatrix}
0 & 1 & 1 & p_{14} & p_{15} \\
1 & 0 & 1 & p_{24} & p_{25} \\
1 & 1 & 0 & p_{34} & p_{35} \\
p_{14} & p_{24} & p_{34} & p_{44} & p_{45} \\
p_{15} & p_{25} & p_{35} & p_{45} & p_{55} \\
\end{pmatrix}.
\]
Then there exist unique matrices $B$ and $C$ such that $A=BCB^{T}$, where 
\[
BCB^{T}=\begin{pmatrix}
1 & 0 & 0 & 0 & 0 \\
0 & 1 & 0 & 0 & 0 \\
0 & 0 & 1 & 0 & 0 \\
b_{14} & b_{24} & b_{34} & 1 & 0 \\
b_{15} & b_{25} & b_{35} & 0 & 1 \\
\end{pmatrix}
\begin{pmatrix}
0 & 1 & 1 & 0 & 0 \\
1 & 0 & 1 & 0 & 0 \\
1 & 1 & 0 & 0 & 0\\
0 & 0 & 0 & c_{44} & c_{45} \\
0 & 0 & 0 & c_{45} & c_{55} \\
\end{pmatrix}
\begin{pmatrix}
1 & 0 & 0 & b_{14} & b_{15} \\
0 & 1 & 0 & b_{24} & b_{25} \\
0 & 0 & 1 & b_{34} & b_{35} \\
0 & 0 & 0 & 1 & 0 \\
0 & 0 & 0 & 0 & 1 \\
\end{pmatrix}.
\]
\end{lemma}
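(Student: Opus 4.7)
\smallskip

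The plan is to unpack the matrix equation $A = BCB^{T}$ into a $2\times 2$ block system and solve it block by block. Write
\[
A = \begin{pmatrix} J & P \\ P^{T} & Q \end{pmatrix}, \quad
J = \begin{pmatrix} 0 & 1 & 1 \\ 1 & 0 & 1 \\ 1 & 1 & 0 \end{pmatrix}, \quad
P = \begin{pmatrix} p_{14} & p_{15} \\ p_{24} & p_{25} \\ p_{34} & p_{35} \end{pmatrix}, \quad
Q = \begin{pmatrix} p_{44} & p_{45} \\ p_{45} & p_{55} \end{pmatrix},
\]
and write $B = \begin{pmatrix} I_{3} & 0 \\ B' & I_{2} \end{pmatrix}$ and $C = \begin{pmatrix} J & 0 \\ 0 & C' \end{pmatrix}$, where $B'$ is the $2\times 3$ block of unknowns $b_{ij}$ and $C'$ is the $2\times 2$ block of unknowns $c_{ij}$.

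Multiplying out, one gets
\[
BCB^{T} = \begin{pmatrix} J & J B'^{T} \\ B' J & B' J B'^{T} + C' \end{pmatrix}.
\]
So the equation $A = BCB^{T}$ is equivalent to the two block equations $J B'^{T} = P$ and $C' = Q - B' J B'^{T}$. The first equation determines $B'$ uniquely provided $J$ is invertible, and the second equation then determines $C'$ uniquely.

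The only nonroutine step is observing that $J$ is invertible. Since $J = \mathbf{1}\mathbf{1}^{T} - I_{3}$ with $\mathbf{1} = (1,1,1)^{T}$, its eigenvalues are $2, -1, -1$, so $\det J = 2$ and
\[
J^{-1} = \tfrac{1}{2}\begin{pmatrix} -1 & 1 & 1 \\ 1 & -1 & 1 \\ 1 & 1 & -1 \end{pmatrix}.
\]
Applying $J^{-1}$ to $J B'^{T} = P$ yields closed-form expressions such as $b_{14} = \tfrac{1}{2}(p_{24} + p_{34} - p_{14})$ and analogous formulas for the remaining $b_{ij}$, and then $C' = Q - B' J B'^{T}$ gives $c_{44}, c_{45}, c_{55}$ as explicit polynomials in the entries of $A$.

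No step is a genuine obstacle; the content of the lemma is the block triangularization of $A$ with respect to the nondegenerate pivot block $J$, and the proof amounts to writing down the block equations and inverting $J$. The only choice to justify is that the prescribed block-triangular shape of $B$ and block-diagonal shape of $C$ are preserved by the equation, which is immediate from the computation above.
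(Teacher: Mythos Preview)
Your proof is correct and follows essentially the same approach as the paper: both solve for the off-diagonal block $B'$ using the invertibility of the $3\times 3$ pivot block $J$, and then recover $C$ (equivalently $C'$) from $C = B^{-1}A(B^{T})^{-1}$. Your block formulation makes the required block-diagonal shape of $C$ transparent, whereas the paper simply asserts it is straightforward to check; otherwise the arguments coincide.
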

\begin{proof}
For any $p_{14}, p_{15}, p_{24}, p_{25}, p_{34}, p_{35}$, there exist unique  $b_{14}, b_{15}, b_{24}, b_{25}, b_{34}, b_{35}$ such that
\[
\begin{pmatrix}
b_{14} & b_{24} & b_{34}\\
b_{15} & b_{25} & b_{35}\\
\end{pmatrix}
\begin{pmatrix}
0 & 1 & 1 \\
1 & 0 & 1 \\
1 & 1 & 0 \\
\end{pmatrix}
=\begin{pmatrix}
-p_{14} & -p_{24} & -p_{34} \\
-p_{15} & -p_{25} & -p_{35} \\
\end{pmatrix}.
\]
This defines the invertible lower triangular matrix $B$.
The matrix $C$ is uniquely determined by the condition $C=B^{-1}A(B^{T})^{-1}$, and it is straightforward to check that $C$ has the required block diagonal shape.
\end{proof}

\begin{lemma}\label{lem:lor_plus_nsd}
Consider the symmetric $2 \times 2$ matrices
\[
M_1=\begin{pmatrix}
    \mu&\nu\\
    \nu&\xi
\end{pmatrix} \ \ \text{and} \ \ 
M_2=\begin{pmatrix}
\alpha&\beta\\
\beta&\gamma
\end{pmatrix}.
\]
If $M_1$ is Lorentzian, $M_2$ is negative semidefinite, and $M_1+M_2$ has nonnegative entries, then
\[
(\mu+\alpha)(\nu+\beta)(\xi+\gamma) \le \mu\nu\xi .
\]
\end{lemma}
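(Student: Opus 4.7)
The plan is to normalize so that the inequality reduces to an elementary scalar estimate. First I would dispose of degenerate cases: if $\mu=0$ then $\mu\nu\xi=0$ while $(\mu+\alpha)(\nu+\beta)(\xi+\gamma)=\alpha(\nu+\beta)(\xi+\gamma)\le 0$, since $\alpha\le 0$ and the other two factors are nonnegative; the case $\xi=0$ is symmetric; and $\nu=0$ combined with $\nu^{2}\ge\mu\xi$ forces $\mu\xi=0$, reducing to a preceding case.

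Assume henceforth $\mu,\xi,\nu>0$. Set $x:=-\alpha/\mu$ and $z:=-\gamma/\xi$; the conditions $\alpha\le 0$ and $\mu+\alpha\ge 0$ give $x\in[0,1]$, and symmetrically $z\in[0,1]$. The two hypotheses of the lemma combine through the chain
\[
\beta^{2}\le\alpha\gamma=\mu\xi\,xz\le\nu^{2}\,xz,
\]
where the first inequality is the negative semidefiniteness of $M_2$ and the second is the Lorentzian bound on $M_1$. This yields $\nu+\beta\le\nu(1+\sqrt{xz})$, while $(\mu+\alpha)(\xi+\gamma)=\mu\xi(1-x)(1-z)$, so the lemma reduces to the scalar inequality
\[
(1-x)(1-z)\bigl(1+\sqrt{xz}\bigr)\le 1 \quad \text{for all } x,z\in[0,1].
\]

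To prove this last inequality, I would substitute $p=\sqrt{x}$, $q=\sqrt{z}$ and expand, rewriting the target as
\[
(p^{2}+q^{2})(1+pq)\ge pq\bigl(1+pq+p^{2}q^{2}\bigr).
\]
Setting $t:=pq\in[0,1]$ and invoking $p^{2}+q^{2}\ge 2pq=2t$, it suffices to verify $2(1+t)\ge 1+t+t^{2}$, i.e., $t^{2}-t-1\le 0$, which holds throughout $[0,1]$.

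The main obstacle in this plan is the combined estimate $\beta^{2}\le\nu^{2}xz$, which requires both hypotheses simultaneously: the bound on $\beta^{2}$ in terms of $\alpha\gamma$ comes from $M_2$ being negative semidefinite, and replacing $\mu\xi$ by $\nu^{2}$ uses the Lorentzian condition on $M_1$. Without this synthesis the off-diagonal factor $\nu+\beta$ cannot be controlled, since $\beta$ can be positive when $\alpha,\gamma<0$. Once the bridging estimate is in hand, the remainder is a routine AM-GM reduction.
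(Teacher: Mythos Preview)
Your proof is correct. The degenerate cases are handled cleanly, and the main reduction to the scalar inequality $(1-x)(1-z)(1+\sqrt{xz})\le 1$ on $[0,1]^2$ is valid; the AM--GM step $p^2+q^2\ge 2pq$ followed by $t^2-t-1\le 0$ on $[0,1]$ finishes it.

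The paper's argument is genuinely different. Rather than bounding the product directly, it shows that $g(t)=(\mu+t\alpha)(\nu+t\beta)(\xi+t\gamma)$ is nonincreasing on $[0,1]$: since $M_1+tM_2$ remains Lorentzian for every $t$, it suffices to check $g'(0)\le 0$, and after scaling $M_2$ so that $\alpha=\gamma=-1$ this becomes the elementary estimate $-\mu\nu+\mu\xi-\nu\xi\le 0$ via $\mu\xi\le\nu^2$. Your approach normalizes by $\mu$ and $\xi$ instead of by $M_2$, and its key move is the single chain $\beta^2\le\alpha\gamma\le\nu^2 xz$, which fuses the two hypotheses in one stroke where the paper invokes them sequentially. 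Your route is calculus-free and slightly more self-contained; the paper's monotonicity viewpoint is a bit more conceptual and explains why the inequality degrades continuously as one moves from $M_1$ to $M_1+M_2$.
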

\begin{proof}
It suffices to show that $(\mu+t\alpha)(\nu+t\beta)(\xi+t\gamma)$ is a decreasing function for $0\leq t\leq 1$. 
Since $M_1+tM_2$is a Lorentzian matrix for any $0\leq t\leq 1$, it suffices to show that  
\[
\frac{d}{dt}(\mu+t\alpha)(\nu+t\beta)(\xi+t\gamma) \ \bigg|_{t=0}\leq 0.
\]
By taking limits, we may reduce to the case when $M_2$ is negative definite.
Since the claimed inequality is invariant under the symmetric scaling of rows and columns, we may further suppose that $\alpha=\gamma=-1$.
Then $\beta \le 1$, and hence
\[
\frac{d}{dt}(\mu+t\alpha)(\nu+t\beta)(\xi+t\gamma)\ \bigg|_{t=0}=\mu\nu\gamma+\mu\xi\beta+\nu\xi\alpha
\leq -\mu\nu+\mu\xi-\nu\xi.
\]
Since $M_1$ is Lorentzian, $\mu\xi\leq \nu^2$, so the right-hand side satisfies
\[
-\mu\nu+\mu\xi-\nu\xi \le -2\nu \sqrt{\mu\xi}+\mu\xi \le - \mu\xi \le 0.\qedhere
\]
\end{proof}

\begin{lemma}\label{lem:hard_lemma}
For positive numbers $x_1,x_2,x_3,y_1,y_2,y_3$, set
\begin{align*}
X&\coloneq 2x_1x_2+2x_1x_3+2x_2x_3-x_1^2-x_2^2-x_3^2,\\
Y&\coloneq 2y_1y_2+2y_1y_3+2y_2y_3-y_1^2-y_2^2-y_3^2,\\
Z&\coloneq x_1(y_2+y_3-y_1)+x_2(y_1+y_3-y_2)+x_3(y_1+y_2-y_3).
\end{align*}
If $X,Y,Z$ are nonnegative, then $XYZ < 32x_1x_2x_3y_1y_2y_3$.
\end{lemma}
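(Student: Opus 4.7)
My plan is to reduce the lemma, via the law of sines, to a trigonometric inequality and verify it by a compactness argument. First, the Heron-type factorization
\[
X = (\sqrt{x_1}+\sqrt{x_2}+\sqrt{x_3})(-\sqrt{x_1}+\sqrt{x_2}+\sqrt{x_3})(\sqrt{x_1}-\sqrt{x_2}+\sqrt{x_3})(\sqrt{x_1}+\sqrt{x_2}-\sqrt{x_3})
\]
(and its analogue for $Y$) recognizes the hypothesis $X,Y\ge 0$ as the condition that $(\sqrt{x_i})$ and $(\sqrt{y_i})$ are the side lengths of (possibly degenerate) triangles $T_x,T_y$. The boundary cases $X=0$ or $Y=0$ make the lemma trivial, since then $XYZ=0<32\,x_1x_2x_3y_1y_2y_3$, so I may assume $T_x,T_y$ are nondegenerate, with angles $\theta_i,\phi_i$ opposite the corresponding sides and circumradii $R_x,R_y$. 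By the law of sines, $x_i=4R_x^2\sin^2\theta_i$ and $y_i=4R_y^2\sin^2\phi_i$.

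Next, combining Heron's formula $X=16A_x^2$ with the identity $A_x=\sqrt{x_1x_2x_3}/(4R_x)$ yields $X/(x_1x_2x_3)=1/R_x^2$, and likewise $Y/(y_1y_2y_3)=1/R_y^2$. Substituting the angle parametrization into $Z=(\sum x_i)(\sum y_i)-2\sum x_iy_i$ and pulling out a factor of $16R_x^2R_y^2$ reduces the target inequality $XYZ<32\,x_1x_2x_3y_1y_2y_3$ to the dimensionless trigonometric bound
\[
D \coloneq \Bigl(\sum_i\sin^2\theta_i\Bigr)\Bigl(\sum_i\sin^2\phi_i\Bigr)-2\sum_i\sin^2\theta_i\sin^2\phi_i\;<\;2.
\]

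Finally, I would prove $D<2$ on the open locus where all $\theta_i,\phi_i\in(0,\pi)$. Using the identity $\sin^2\phi_j+\sin^2\phi_k-\sin^2\phi_i=2\cos\phi_i\sin\phi_j\sin\phi_k$, valid because $\sum\phi_i=\pi$, one rewrites $D=2\sum_i\sin^2\theta_i\cos\phi_i\sin\phi_j\sin\phi_k$. The function $D$ extends continuously to the compact product of closed angle-simplices, and a boundary calculation shows its maximum equals $2$, attained only where some $\theta_i$ or $\phi_i$ vanishes---for instance, on the stratum $\theta_1=0$ one has $\sin^2\theta_2=\sin^2\theta_3$ and $D=2\sin^2\theta_2\sin^2\phi_1\le 2$, with equality only at $\theta_2=\phi_1=\pi/2$. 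Since $x_i,y_i>0$ forces all angles strictly into $(0,\pi)$, the supremum $2$ is only approached, never attained, on our open domain, so $D<2$. The main obstacle will be ruling out interior critical points at which $D=2$: this can be handled either by a Lagrange multiplier analysis (the gradient conditions admit only the symmetric critical point $\theta_i=\phi_i=\pi/3$ with $D=27/16$, together with certain asymmetric solutions such as $\theta=(\pi/6,\pi/3,\pi/2),\phi=(\pi/2,\pi/3,\pi/6)$ with $D=15/8$, all strictly below $2$), or more cleanly by a local Taylor expansion showing $D=2-c\epsilon^2+O(\epsilon^4)$ along any sequence of angles approaching the degenerate boundary.
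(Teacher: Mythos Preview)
Your reduction is correct and elegant: the Heron factorization of $X$ shows that $X>0$ forces $(\sqrt{x_i})$ to be the sides of a genuine triangle, and the law-of-sines parametrization cleanly converts $XYZ<32\,x_1x_2x_3y_1y_2y_3$ into the dimensionless inequality $D<2$. This is a genuinely different route from the paper's. The paper never parametrizes by angles; instead it substitutes $u_i=x_j+x_k-x_i$, $v_i=y_j+y_k-y_i$, which only makes sense when $(x_i)$ and $(y_i)$ themselves satisfy the triangle inequality (a condition \emph{not} implied by $X,Y\ge 0$), so the paper first has to reduce to that case by a monotonicity argument replacing $x_1$ by $x_2+x_3$. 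After the substitution, the paper's proof is pure algebra: the difference of the two sides is an explicit polynomial in $u_i,v_i$ with a single negative monomial, which is absorbed into a perfect square. Your approach trades that reduction step for the trigonometric inequality $D<2$.

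The gap is that you never actually prove $D<2$. Your boundary calculation is fine and shows $\max_{\partial}D=2$, but the compactness argument only gives the conclusion once you know that no \emph{interior} critical point has $D\ge 2$. You acknowledge this is ``the main obstacle'' and then offer two options, neither of which is carried out. The Lagrange claim---that the constrained critical points are exactly the symmetric one and a few explicit asymmetric ones, all with $D<2$---is asserted without computation; the system $\partial D/\partial\theta_i=\lambda$, $\partial D/\partial\phi_i=\mu$ is a coupled trigonometric system in four free variables, and listing all its solutions is not routine. The alternative Taylor-expansion idea is logically insufficient as stated: showing $D=2-c\epsilon^2+O(\epsilon^4)$ near the boundary maxima only tells you those boundary points are local maxima of the extension, not that $D<2$ throughout the interior. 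So as written, the proposal stops exactly at the hard step. If you want to complete this approach you will need either a full classification of interior critical points or a direct algebraic proof of $D<2$; the paper's polynomial identity, transported through your parametrization, would in fact supply one.
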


We note that the final inequality may fail without the assumption that $X,Y,Z$ are nonnegative, for example, when $(x_1,x_2,x_3,y_1,y_2,y_3)=(6,1,1,1,1,6)$.

\begin{proof}
First suppose that $(x_1,x_2,x_3)$ and $(y_1,y_2,y_3)$ satisfy the triangle inequalities.
Setting $u_i\coloneq x_j+x_k-x_i>0$ and $v_i\coloneq y_j+y_k-y_i>0$, we have
\[
X=u_1u_2+u_1u_3+u_2u_3, \ \ Y=v_1v_2+v_1v_3+v_2v_3, \ \ 2Z=(u_2+u_3)v_1+(u_1+u_3)v_2+(u_1+u_2)v_3.
\]
The goal is to show that
\begin{multline*}
(u_1u_2+u_1u_3+u_2u_3)(v_1v_2+v_1v_3+v_2v_3)((u_2+u_3)v_1+(u_1+u_3)v_2+(u_1+u_2)v_3) \\
< (u_1+u_2)(u_1+u_3)(u_2+u_3)(v_1+v_2)(v_1+v_3)(v_2+v_3).
\end{multline*}
The right-hand side minus the left-hand side can be simplified to
\begin{multline*}
u_1^2 u_2 v_1^2 v_2+ u_1^2 u_3 v_1^2 v_2+ u_1^2 u_2 v_1^2 v_3+ u_1^2 u_3 v_1^2 v_3+ u_1 u_2^2 v_1 v_2^2  + u_2^2 u_3 v_1 v_2^2  + u_1 u_2^2 v_2^2 v_3 + u_2^2 u_3 v_2^2 v_3\\
 + u_1 u_3^2 v_2 v_3^2 + u_2 u_3^2 v_2 v_3^2  + u_1 u_3^2 v_1 v_3^2  + u_2 u_3^2 v_1 v_3^2 - 2 u_1 u_2 u_3 v_1 v_2 v_3.
\end{multline*}
Notice that at most one of the nonnegative numbers $u_1,u_2,u_3$ is zero, and similarly for $v_1,v_2,v_3$. Thus, the sum of the positive coefficient terms is strictly positive. If any of the $u_i$ is zero, then the difference is positive. If $u_1,u_2,u_3$ are all positive, then
the only negative term $- 2 u_1 u_2 u_3 v_1 v_2 v_3 $ can be grouped with the terms $u_1^2 u_2 v_1^2 v_2$ and $u_2 u_3^2 v_2 v_3^2$ to form a square, and the remaining sum is positive.

Thus, without loss of generality, it is sufficient to justify the inequality under the assumption that  $x_1>x_2+x_3$ and $x_2 \ge x_3$. If $y_3<y_2$, then $(y_1+y_3-y_2)<(y_1+y_2-y_3)$. In this case, 
\[
x_2(y_1+y_3-y_2)+x_3(y_1+y_2-y_3)
<
x_2(y_1+y_2-y_3)+x_3(y_1+y_3-y_2)  \ \ \text{and} \ \ 
\]
\[
x_1(y_2+y_3-y_1)+x_2(y_1+y_3-y_2)+x_3(y_1+y_2-y_3)
<
x_1(y_2+y_3-y_1)+x_2(y_1+y_2-y_3)+x_3(y_1+y_3-y_2).
\]
Thus,  swapping $y_2$ and $y_3$ makes $Z$ larger without changing $X$, $Y$, and $x_1x_2x_3y_1y_2y_3$.
Therefore, it is enough to consider the case when $x_1>x_2+x_3$ and $x_2 \ge x_3$ and $y_3 \ge y_2$.

Since
$X= -(x_1-x_2-x_3)^2+4x_2x_3$,
 replacing $x_1$ by $x_2+x_3$ makes $X$ larger without changing $Y$. We check that the same replacement makes the ratio $Z/x_1$ larger:
\begin{multline*}
(y_2+y_3-y_1)+(x_2(y_1+y_3-y_2)+x_3(y_1+y_2-y_3))x_1^{-1}\\
< (y_2+y_3-y_1)+(x_2(y_1+y_3-y_2)+x_3(y_1+y_2-y_3))(x_2+x_3)^{-1}.
\end{multline*}
In fact, we have $x_1>x_2+x_3$ and 
\[
x_2(y_1+y_3-y_2)+x_3(y_1+y_2-y_3)=(x_2+x_3)y_1+(x_2-x_3)(y_3-y_2)\geq (x_2+x_3)y_1> 0.
\]
Since
\[
Z=y_1(x_2+x_3-x_1)+y_2(x_1+x_3-x_2)+y_3(x_1+x_2-x_3),
\]
replacing $x_1$ by $x_2+x_3$ preserves the nonnegativity of $Z$, this reduces the problem to the case when $(x_1,x_2,x_3)$ satisfies the triangle inequalities. Repeating the argument for $(y_1, y_2, y_3)$, we reduce the problem to the case where both $(x_1, x_2, x_3)$ and $(y_1, y_2, y_3)$ satisfy the triangle inequalities, which was considered before.
\end{proof}

\begin{proof}[Proof of Theorem~\ref{mth:pentagonal}]

To show the pentagonal ratio is at most 4, it suffices to consider Lorentzian matrices of the form 
\[
A=\begin{pmatrix}
0 & 1 & 1 & p_{14} & p_{15} \\
1 & 0 & 1 & p_{24} & p_{25} \\
1 & 1 & 0 & p_{34} & p_{35} \\
p_{14} & p_{24} & p_{34} & p_{44} & p_{45} \\
p_{15} & p_{25} & p_{35} & p_{45} & p_{55} \\
\end{pmatrix}.
\]
Indeed, we can set the first three diagonal entries to be zero, since decreasing diagonal entries only decreases eigenvalues. 
We can then rescale rows and columns using the scaling action to obtain matrix $A$. Both operations do not affect the value of the pentagonal ratio.
We need to show that 
\[
\frac{p_{44}p_{45}p_{55}}{p_{14}p_{24}p_{34}p_{15}p_{25}p_{35}}\leq 4.
\]
By Lemma \ref{lem:factors}, there is a unique factorization 
\[
A=B \cdot C \cdot B^{T}=\begin{pmatrix}
1 & 0 & 0 & 0 & 0 \\
0 & 1 & 0 & 0 & 0 \\
0 & 0 & 1 & 0 & 0 \\
b_{14} & b_{24} & b_{34} & 1 & 0 \\
b_{15} & b_{25} & b_{35} & 0 & 1 \\
\end{pmatrix}
\begin{pmatrix}
0 & 1 & 1 & 0 & 0 \\
1 & 0 & 1 & 0 & 0 \\
1 & 1 & 0 & 0 & 0\\
0 & 0 & 0 & c_{44} & c_{45} \\
0 & 0 & 0 & c_{45} & c_{55} \\
\end{pmatrix}
\begin{pmatrix}
1 & 0 & 0 & b_{14} & b_{15} \\
0 & 1 & 0 & b_{24} & b_{25} \\
0 & 0 & 1 & b_{34} & b_{35} \\
0 & 0 & 0 & 1 & 0 \\
0 & 0 & 0 & 0 & 1 \\
\end{pmatrix}.
\]
Since $A$ is Lorentzian, $C$ has at most one positive eigenvalue. The upper-left block of $C$ has exactly one positive eigenvalue, it follows that the lower-right block of $C$ is negative semidefinite.
For $0 \le t \le 1$, we consider the matrices $A(t)$ and $C(t)$ defined by
\[
A(t)=B\cdot C(t)\cdot B^{T}=\begin{pmatrix}
1 & 0 & 0 & 0 & 0 \\
0 & 1 & 0 & 0 & 0 \\
0 & 0 & 1 & 0 & 0 \\
b_{14} & b_{24} & b_{34} & 1 & 0 \\
b_{15} & b_{25} & b_{35} & 0 & 1 \\
\end{pmatrix}
\begin{pmatrix}
0 & 1 & 1 & 0 & 0 \\
1 & 0 & 1 & 0 & 0 \\
1 & 1 & 0 & 0 & 0\\
0 & 0 & 0 & tc_{44} & tc_{45} \\
0 & 0 & 0 & tc_{45} & tc_{55} \\
\end{pmatrix}
\begin{pmatrix}
1 & 0 & 0 & b_{14} & b_{15} \\
0 & 1 & 0 & b_{24} & b_{25} \\
0 & 0 & 1 & b_{34} & b_{35} \\
0 & 0 & 0 & 1 & 0 \\
0 & 0 & 0 & 0 & 1 \\
\end{pmatrix}.
\]
Matrices $B$ and $B^{T}$ correspond to sequences of row and column operations respectively, so 
\[
A(t)=
\begin{pmatrix}
0 & 1 & 1 & p_{14} & p_{15} \\
1 & 0 & 1 & p_{24} & p_{25} \\
1 & 1 & 0 & p_{34} & p_{35} \\
p_{14} & p_{24} & p_{34} & X+tc_{44} & Y+tc_{45} \\
p_{15} & p_{25} & p_{35} & Y+tc_{45} & Z+tc_{55} \\
\end{pmatrix}
\]
for some $X,Y,Z$ independent of $t$. 
The rank of $C(0)$ is $3$, and hence the rank of $A(0)$ is $3$. Thus, all the $4\times 4$ minors of $A(0)$ are zero, which implies that 
\begin{align*}
    X&=\frac{2p_{14}p_{24}+2p_{14}p_{34}+2p_{24}p_{34}-p_{14}^2-p_{24}^2-p_{34}^2}{2},\\
    Z&=\frac{2p_{15}p_{25}+2p_{15}p_{35}+2p_{25}p_{35}-p_{15}^2-p_{25}^2-p_{35}^2}{2}, \\
Y&=\frac{p_{14}p_{25}+p_{14}p_{35}+p_{24}p_{15}+p_{24}p_{35}+p_{34}p_{15}+p_{34}p_{25}-p_{14}p_{15}-p_{24}p_{25}-p_{34}p_{35}}{2}.
\end{align*}
Since $c_{44}$ and $c_{55}$ are nonpositive and $p_{44}=X+c_{44}$ and  $p_{55}=Z+c_{55}$ are nonnegative, 
\[
X+tc_{44} \ge 0 \ \ \text{and} \ \ Z+tc_{55} \ge 0 \ \ \text{for all $0 \le t \le 1$.}
\]
In particular, $X$ and $Z$ are nonnegative.

We now reduce to the case when $Y$ is nonnegative as well.
Since $C(t)$ has at most one positive eigenvalue for all $0 \le t \le 1$,  the same holds for the matrix $A(t)$, and hence
\[
\det \begin{pmatrix}
X+tc_{44} & Y+tc_{45} \\
 Y+tc_{45} & Z+tc_{55} \\    
\end{pmatrix} \le 0.
\]
Suppose $Y$ is negative. Since $Y+c_{45} \ge 0$, we must have $c_{45}>0$ and there is a unique positive $t_1 \le 1$ such that $Y+t_1c_{45}=0$. The determinantal inequality above says that 
\[
X+t_1c_{44}=0  \ \ \text{or} \ \  Z+t_1c_{55}=0.
\]
If $t_1<1$, this implies that $c_{44}=0$ or $c_{55}=0$, and hence $c_{45}=0$ by the negative semidefiniteness of the lower-right block of $C$, reaching a contradiction.
Thus 
\[
p_{45}=Y+c_{45}=0,
\]
in which case the pentagonal inequality holds trivially.

The remaining case to consider is when $X,Y,Z \ge 0$, that is, when the matrix $A(0)$ is Lorentzian. 
We apply Lemma \ref{lem:lor_plus_nsd} to the sum
\[
\begin{pmatrix}
X & Y \\
 Y & Z \\    
\end{pmatrix}+\begin{pmatrix}
c_{44} & c_{45} \\
c_{45} & c_{55} \\    
\end{pmatrix}=\begin{pmatrix}
p_{44} & p_{45} \\
p_{45} & p_{55} \\    
\end{pmatrix},
\]
and conclude that $p_{44}p_{45}p_{55}\leq XYZ$. Thus, the pentagonal ratio of $A(0)$ is greater or equal to the pentagonal ratio of $A$. Thus, it is enough to show the pentagonal inequality for $A(0)$, which reads
\[
XYZ\leq 4p_{14}p_{24}p_{34}p_{15}p_{25}p_{35}.
\]
This is precisely the content of Lemma \ref{lem:hard_lemma}.
\end{proof}

It is interesting to compare the above analysis with that for the pentagonal ratio on $\Delta_n^+(\T_p)$, which is much simpler.

\begin{proposition}
Let nonnegative $p$ and any matrix $(p_{ij}) \in \Delta_n^+(\T_p)$, we have
\[
\frac{p_{12}p_{13}p_{23}p_{45}p_{44}p_{55}}{p_{14}p_{15}p_{24}p_{25}p_{34}p_{35}} \le 8^{p}.
\]
The equality is achieved by the matrix 
\[
\begin{pmatrix}
0 & 2^{\frac{p}{2}} & 2^{\frac{p}{2}} & 1 & 1 \\
2^{\frac{p}{2}} & 0 & 2^{\frac{p}{2}} & 1 & 1 \\
2^{\frac{p}{2}} & 2^{\frac{p}{2}} & 0 & 1 & 1 \\
1 & 1 & 1 & 2^{\frac{p}{2}} & 2^{\frac{p}{2}} \\
1 & 1 & 1 & 2^{\frac{p}{2}} & 2^{\frac{p}{2}}
\end{pmatrix} \in \Delta_n(\T_p).
\]
\end{proposition}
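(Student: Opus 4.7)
The plan is to first reduce to a $p$-independent statement by setting $q_{ij} = p_{ij}^{1/p}$. The defining inequality of $\Delta_n^+(\T_p)$ then becomes the clean multiplicative triangle form $q_{ij}q_{kl} \le q_{ik}q_{jl} + q_{il}q_{jk}$, and the claim reduces to showing
\[
q_{12}q_{13}q_{23}q_{44}q_{45}q_{55} \le 8\, q_{14}q_{15}q_{24}q_{25}q_{34}q_{35}
\]
for any such $(q_{ij})$, with no $p$ dependence at all.

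From the triangle condition I will use two flavors of inequalities. The \emph{full} three-term inequality $q_{12}q_{45}\le q_{14}q_{25}+q_{15}q_{24}$ comes from plugging in four distinct indices, while the \emph{doubling} inequality $q_{ij}q_{kk}\le 2q_{ik}q_{jk}$ comes from repeating an index in the triangle condition (take $(k,k,i,j)$ as the four indices). The strategy is to produce two competing upper bounds for the left-hand side by multiplying three such inequalities: combining the full inequality with either the pair $(q_{13}q_{44}\le 2q_{14}q_{34},\ q_{23}q_{55}\le 2q_{25}q_{35})$ or its mirror $(q_{23}q_{44}\le 2q_{24}q_{34},\ q_{13}q_{55}\le 2q_{15}q_{35})$, I obtain bounds of the form $4(q_{14}q_{25}+q_{15}q_{24})\cdot T$ with $T=q_{14}q_{25}q_{34}q_{35}$ and $T=q_{15}q_{24}q_{34}q_{35}$ respectively.

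Taking the minimum of these two bounds reduces the problem to the elementary inequality $(a+b)\min(a,b)\le 2ab$, applied with $a=q_{14}q_{25}$ and $b=q_{15}q_{24}$. This cleanly yields the target inequality with constant $8$. For the equality case I will check that the displayed extremal matrix saturates every triangle inequality used, and satisfies $q_{14}q_{25}=q_{15}q_{24}=1$, so the elementary step is tight as well.

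The main obstacle is choosing the correct combination of triangle inequalities. The naive plan of multiplying all three full inequalities $q_{1j}q_{45}\le q_{14}q_{j5}+q_{15}q_{j4}$ (for $j=2,3$) and $q_{23}q_{45}\le q_{24}q_{35}+q_{25}q_{34}$, paired with $q_{44}q_{55}\le 2q_{45}^2$, is doomed: the resulting product $(a+b)(c+d)(e+f)$ can exceed $8\sqrt{abcdef}$ by AM-GM in the wrong direction, losing an irretrievable factor of $2$. The essential insight is that exactly one full three-term inequality, balanced by two doubling inequalities, matches the extremal configuration without waste.
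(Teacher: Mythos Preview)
Your proposal is correct and follows essentially the same route as the paper: one full four-index triangle inequality $q_{12}q_{45}\le q_{14}q_{25}+q_{15}q_{24}$ combined with the same two pairs of doubling inequalities for $q_{13}q_{44},q_{23}q_{55}$ and $q_{23}q_{44},q_{13}q_{55}$. The only cosmetic difference is in the final combination step: the paper applies the two doubling bounds term-by-term (the bound producing $q_{15}q_{24}$ to the summand $q_{14}q_{25}$, and vice versa), so each summand is directly $\le 4\,q_{14}q_{15}q_{24}q_{25}q_{34}q_{35}$ and the sum is $\le 8\,(\cdots)$; you instead take the minimum of the two bounds and invoke $(a+b)\min(a,b)\le 2ab$. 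Both are equivalent and saturate on the same extremal matrix. One small omission: your substitution $q_{ij}=p_{ij}^{1/p}$ tacitly assumes $p>0$; the paper handles $p=0$ by the inclusion $\Delta_n(\T_0)\subseteq\Delta_n(\T_p)$ and letting $p\to 0$, and you should say a word to cover that case.
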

\begin{proof}
We first consider the case when $p$ is positive. We have
\[
(p_{44}p_{13})^{1/p} \le 2 (p_{14}p_{34})^{1/p}, ~~ (p_{55}p_{23})^{1/p} \le 2 (p_{25}p_{35})^{1/p}, \ \ \text{so} \ \ (p_{44}p_{55}p_{13}p_{23})^{1/p} \le 4 (p_{14}p_{34}p_{25}p_{35})^{1/p}.
\]
Similarly, we have
\[
(p_{44}p_{23})^{1/p} \le 2 (p_{24}p_{34})^{1/p}, ~~ (p_{55}p_{13})^{1/p} \le 2 (p_{15}p_{35})^{1/p}, \ \ \text{so} \ \ (p_{44}p_{55}p_{13}p_{23})^{1/p} \le 4 (p_{24}p_{34}p_{15}p_{35})^{1/p}.
\]
Using the triangle inequality $(p_{12}p_{45})^{1/p} \le (p_{14}p_{25})^{1/p}+(p_{15}p_{24})^{1/p}$, we get
\begin{align*}
(p_{12}p_{13}p_{23}p_{44}p_{55}p_{45})^{1/p} &\le (p_{44}p_{55}p_{13}p_{23}p_{14}p_{25})^{1/p}+(p_{44}p_{55}p_{13}p_{23}p_{15}p_{24})^{1/p} \\
&\le 8(p_{14}p_{15}p_{24}p_{25}p_{34}p_{35})^{1/p}.
\end{align*}
Since $\Delta_n(\T_0) \subseteq \Delta_n(\T_p)$ for all $p>0$, the case of $p=0$ follows from taking the limit $p \to 0$.
\end{proof}

\section{Lorentzian matrices and $\delta$-hyperbolic spaces}\label{sec:hyperbolic}



We give a detailed proof of Theorem~\ref{mth:BR} outlined in the introduction.

\begin{proposition}
\label{prop:inclusions}
	For any $n>0$, we have 
    $\Delta_n(\T_0)\subseteq \L_n\subseteq \Delta_n(\T_2)$.
\end{proposition}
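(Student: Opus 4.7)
The plan is to verify the two inclusions separately, using different machinery for each.

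For the left inclusion $\Delta_n(\T_0) \subseteq \L_n$, the plan is to invoke the Br\"and\'en--Huh characterization of quadratic Lorentzian polynomials: a symmetric matrix with nonnegative entries is Lorentzian if and only if all of its $2 \times 2$ principal minors are nonpositive and its support $\{e_i + e_j : p_{ij} > 0\}$ is $M$-convex. Both conditions can be read directly from the $\T_0$ condition. Applying the $\T_0$ condition to the degenerate tuple $(i,j,k,l)=(a,b,a,b)$, the two products $p_{ij}p_{kl}$ and $p_{il}p_{jk}$ both equal $p_{ab}^2$, and the ``maximum attained at least twice'' condition forces $p_{aa}p_{bb} \le p_{ab}^2$, giving the $2\times 2$ minor condition. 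For $M$-convexity, applying $\T_0$ to four distinct indices $i,j,k,l$ with $p_{ij}p_{kl} > 0$ shows that the maximum of the three pair-products is strictly positive and, being attained at least twice, forces at least one of $p_{ik}p_{jl}$ or $p_{il}p_{jk}$ to be positive as well; this provides the exchange pair required for $M$-convexity of the quadratic support (the remaining configurations, such as $\alpha = 2 e_a, \beta = e_c + e_d$, are handled from the mixed-coincidence cases of $\T_0$).

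For the right inclusion $\L_n \subseteq \Delta_n(\T_2)$, the plan is to apply the Alexandrov--Fenchel inequality for quadratic Lorentzian polynomials, which guarantees $(u^T M v)^2 \ge (u^T M u)(v^T M v)$ for any Lorentzian matrix $M$ and any nonnegative vectors $u, v$. Fix indices $i,j,k,l$. If two coincide, the $\T_2$ inequality either becomes trivial or reduces to $p_{aa}p_{bc} \le 4 p_{ab}p_{ac}$, which follows from the sharper Castelnuovo--Severi inequality $p_{aa}p_{bc} \le 2 p_{ab}p_{ac}$ recorded in the introduction. For four distinct indices, I would restrict to the principal $4 \times 4$ Lorentzian submatrix on $\{i,j,k,l\}$ and then reduce the four diagonal entries to zero; this preserves the Lorentzian property, since it amounts to subtracting a positive semidefinite sum of rank-one pieces and so can only decrease eigenvalues. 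Assuming the four off-diagonal entries $p_{ik}, p_{il}, p_{jk}, p_{jl}$ are strictly positive, I choose positive scalars $\alpha, \beta, \gamma, \delta$ solving $\alpha\gamma\, p_{ik} = \beta\delta\, p_{jl}$ and $\alpha\delta\, p_{il} = \beta\gamma\, p_{jk}$, and apply Alexandrov--Fenchel to $u = \alpha e_i + \beta e_j$ and $v = \gamma e_k + \delta e_l$. The scaling has been rigged so that the two tied pairs in $u^T M v$ satisfy AM--GM with equality, producing $u^T M v = 2\sqrt{\alpha\beta\gamma\delta}\,\bigl(\sqrt{p_{ik}p_{jl}} + \sqrt{p_{il}p_{jk}}\bigr)$; combined with $u^T M u = 2\alpha\beta\, p_{ij}$ and $v^T M v = 2\gamma\delta\, p_{kl}$, the Alexandrov--Fenchel inequality becomes exactly $\sqrt{p_{ij}p_{kl}} \le \sqrt{p_{ik}p_{jl}} + \sqrt{p_{il}p_{jk}}$ after dividing by $2\sqrt{\alpha\beta\gamma\delta}$.

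The main obstacle is the degenerate case where some off-diagonal entry among $p_{ik}, p_{jl}, p_{il}, p_{jk}$ vanishes, so that the scaling equations above admit no positive solution. I would handle this by a density argument: since every matrix in $\L_n$ is a limit of matrices in $\L_n^+$ (as recalled in Section~2), the Alexandrov--Fenchel argument applies to each approximating positive Lorentzian matrix to yield the $\T_2$ inequality, and the inequality passes to the limit because all operations involved are continuous on $[0,\infty)$.
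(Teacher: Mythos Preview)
There is a genuine gap in your argument for the left inclusion $\Delta_n(\T_0) \subseteq \L_n$. The characterization you invoke---that a symmetric matrix with nonnegative entries is Lorentzian if and only if all $2\times 2$ principal minors are nonpositive and the support is $M$-convex---is \emph{false}. A concrete counterexample is
\[
M=\begin{pmatrix} 0 & 1 & t & t \\ 1 & 0 & t & t \\ t & t & 0 & 1 \\ t & t & 1 & 0 \end{pmatrix}, \qquad 0<t<\tfrac12,
\]
which has zero diagonal (so every $2\times 2$ principal minor equals $-p_{ij}^2\le 0$) and full off-diagonal support (hence $M$-convex), yet has eigenvalues $1+2t,\ 1-2t,\ -1,\ -1$, two of which are positive. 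The $\T_0$ condition is strictly stronger than what you extract from it: it is a tropical Pl\"ucker relation, an $M$-concavity condition on the \emph{values} $\log p_{ij}$, not merely a condition on the support. The paper handles this inclusion by citing \cite[Corollary~3.16]{Branden-Huh}, which shows that quadratic forms whose coefficient function is $M$-concave are Lorentzian; your argument recovers only the support-level shadow of $\T_0$ and misses the spectral content that actually forces a single positive eigenvalue.

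Your argument for the right inclusion $\L_n \subseteq \Delta_n(\T_2)$ is correct and takes a genuinely different route from the paper. Both begin by passing to a principal submatrix and zeroing out the diagonal, but the paper then computes the $4\times 4$ determinant explicitly as a Heron-type product in $\sqrt{p_{ij}p_{kl}},\ \sqrt{p_{ik}p_{jl}},\ \sqrt{p_{il}p_{jk}}$ and reads the triangle inequality off from its sign. Your Alexandrov--Fenchel argument with the tuned weights $\alpha,\beta,\gamma,\delta$ is more conceptual and sidesteps that algebraic identity; on the other hand, the paper's determinant computation is self-contained and handles vanishing off-diagonal entries directly, without the separate density step you need.
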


In the language of \cite{BHKL-polymatroids}, the proposition states that every rank $2$ polymatroid over $\T_0$ is Lorentzian, and every Lorentzian matrix is a rank $2$ polymatroid over $\T_2$.

\begin{proof}
The first inclusion is a special case of  \cite[Corollary 3.16]{Branden-Huh} for quadratic polynomials. The second inclusion follows from \cite{BHKL-triangular2}, whose proof we reproduce here.

Suppose $(p_{ij})$ is a Lorentzian matrix. If $i,j,k,l$ are distinct indices, we consider the matrix 
\[
\begin{pmatrix}
0 & p_{ij} & p_{ik} & p_{il} \\
p_{ij} & 0 & p_{jk} & p_{jl}  \\
p_{ik} & p_{jk} & 0 & p_{kl}  \\
p_{il} & p_{jl} & p_{kl} & 0
\end{pmatrix} 
\]
obtained from replacing the diagonal entries of a principal submatrix of $(p_{ij})$ by zero. This matrix is Lorentzian with 
\begin{multline*}
\det=-\left(\sqrt{p_{ij}p_{kl}}+\sqrt{p_{ik}p_{jl}}+\sqrt{p_{il}p_{jk}}\right)\cdot(-\sqrt{p_{ij}p_{kl}}+\sqrt{p_{ik}p_{jl}}+\sqrt{p_{il}p_{jk}})\\
      \cdot(\sqrt{p_{ij}p_{jl}}-\sqrt{p_{ik}p_{jl}}+\sqrt{p_{il}p_{jk}})\cdot(\sqrt{p_{ij}p_{kl}}+\sqrt{p_{ik}p_{jl}}-\sqrt{p_{il}p_{jk}}) \le 0,
\end{multline*}
so $\sqrt{p_{ij}p_{kl}}$, $\sqrt{p_{ik}p_{jl}}$, $\sqrt{p_{il}p_{jk}}$ form three sides of a triangle \cite[Proposition 3.1]{HHMWW}.

If there are exactly three distinct indices among $i,j,k,l$, we may suppose $k=l$. 
We need to show that $\sqrt{p_{ij}p_{kk}}$, $\sqrt{p_{ik}p_{jk}}$,
 $\sqrt{p_{ik}p_{jk}}$ form sides of a triangle, that is, $\sqrt{p_{ij}p_{kk}} \le 2\sqrt{p_{ik}p_{jk}}$.
This follows from the Lorentzian condition for $(p_{ij})$ because
\[
\det\begin{pmatrix}
	0 & p_{ij} & p_{ik} \\
	p_{ij} & 0      & p_{jk} \\
	p_{ik} & p_{jk} & p_{kk}
\end{pmatrix}=p_{ij}(2p_{ik}p_{jk}-p_{ij}p_{kk})\ge 0.
\]
If three of $i,j,k,l$ are equal, then the statement is trivial. Thus, the only remaining nontrivial case is when $i=k$ and $j=l$. We need to check that $\sqrt{p_{ii}p_{jj}} \le 2p_{ij}$, and this follows from
\[
\det\begin{pmatrix}
	p_{ii} & p_{ij}  \\
	p_{ij} & p_{jj}      
\end{pmatrix}=p_{ii}p_{jj}-p_{ij}^2\le 0. \qedhere
\]
\end{proof}

We now recall the definition of tree metric in terms of phylogenetic trees, following \cite[Section 4.3]{Maclagan-Sturmfels}. A \emph{phylogenetic} tree on $n$ leaves is a tree with $n$ labeled leaves and no vertices of degree 2.
We say that $(d_{ij})_{1 \le i< j \le n}$ is a \emph{tree metric} if there is a phylogenetic tree $\tau$ with $n$ leaves and nonnegative edge lengths $\ell$ on $\tau$ such that
\[
\mathrm{d}_{ij}=\bigl( \text{the sum of all $\ell(e)$ over all edges $e$ in the unique path from $i$ to $j$ in $\tau$}\bigr).\footnote{In \cite[Section 4.3]{Maclagan-Sturmfels}, edge lengths are allowed to be non-positive. A vector $d$ is a tree metric in this sense if and only if the sum of $d$ and a constant vector of large positive value is a tree metric in the more restrictive sense above.}
\]
Every tree metric uniquely determines a phylogenetic tree together with the assignment of edge lengths. 

\begin{lemma}
\label{lem:convhullT0} 
The following statements hold for $\underline{\Delta}_n^+(\T_0)$.
\leavevmode
\begin{enumerate}[(1)]\itemsep 5pt
    \item The set of coordinatewise logarithms $\log \underline{\Delta}_n^+(\T_0)\subseteq \RR^{\binom{n}{2}}$ is equal to the space of tree metrics on $n$ leaves. 
    \item 	The convex hull of $\log \underline{\Delta}_n^+(\T_0)$ in $\RR^{\binom{n}{2}}$
    is equal to the cut cone $\Cut_n$.
    \item The cone of reduced bounded ratios $\underline{\BR}(\Delta_n^+(\T_0))$ is the dual of the cut cone $\Cut_n$.
\end{enumerate}
\end{lemma}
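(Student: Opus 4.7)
The plan is to prove the three parts in sequence, with part (1) doing the essential work and parts (2) and (3) following by routine convexity arguments.

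For (1), I would pass to logarithmic variables $d_{ij} = \log p_{ij}$, so that the diagonal normalization becomes $d_{ii} = 0$ and the $\T_0$ condition becomes: for all $i,j,k,l \in [n]$, the maximum among $d_{ij}+d_{kl}$, $d_{ik}+d_{jl}$, $d_{il}+d_{jk}$ is achieved at least twice. Specializing $(i=k,\,j=l)$ with $i\ne j$ yields nonnegativity $d_{ij}\ge 0$; specializing $l=i$ with $i,j,k$ distinct yields the triangle inequality $d_{jk}\le d_{ij}+d_{ik}$; and the case of four distinct indices is the classical four-point condition. Buneman's theorem then identifies such $(d_{ij})$ precisely with the tree metrics realized by phylogenetic trees on $n$ leaves with nonnegative edge lengths, matching the paper's definition.

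For (2), since the set of tree metrics is closed under positive scaling of edge lengths, its convex hull in $\RR^{\binom{n}{2}}$ coincides with the cone of nonnegative combinations of tree metrics. I would then check both inclusions with $\Cut_n$: every cut vector $\delta(S)$ is itself a tree metric, arising from the phylogenetic tree with two internal nodes joined by an edge of length $1$, with leaves in $S$ pendant from one node and leaves in $[n]\setminus S$ pendant from the other through zero-length edges; conversely, any tree metric on a phylogenetic tree $\tau$ with edge lengths $\ell$ decomposes as
\[
d = \sum_{e\in E(\tau)} \ell(e)\,\delta(S_e),
\]
where $S_e\subseteq [n]$ is the split of leaves induced by removing the edge $e$ from $\tau$. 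This gives both inclusions, so the convex hull equals $\Cut_n$.

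For (3), a vector $\alpha$ lies in $\underline{\BR}(\Delta_n^+(\T_0))$ precisely when the linear functional $\sum_{i<j}\alpha_{ij}d_{ij}$ is bounded above on $\log\underline{\Delta}_n^+(\T_0)$; since this set is closed under positive scaling by (1), boundedness above forces $\sum_{i<j}\alpha_{ij}d_{ij}\le 0$ for every tree metric $d$, which by (2) is equivalent to $\alpha$ pairing nonpositively with every element of $\Cut_n$, i.e., $\alpha\in \Cut_n^*$. The main obstacle I anticipate is a clean invocation of Buneman's theorem in the precise form needed---allowing nonnegative rather than strictly positive edge lengths and treating the degenerate cases of coincident indices in the four-point condition uniformly; once (1) is in hand, parts (2) and (3) are essentially formal.
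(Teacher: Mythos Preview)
Your proposal is correct and follows essentially the same route as the paper: pass to log coordinates and identify the $\T_0$ condition with the four-point condition (plus its degenerate cases giving nonnegativity and the triangle inequality) to invoke the tree-metric characterization; then for (2) exhibit each cut vector as a tree metric and decompose an arbitrary tree metric as $\sum_e \ell(e)\,\delta(S_e)$; and for (3) use that the set of tree metrics is a cone to pass from boundedness to nonpositive pairing with $\Cut_n$. The only cosmetic difference is that the paper cites \cite[Lemma~4.3.6]{Maclagan-Sturmfels} directly rather than naming Buneman, and handles nonnegativity via the diagonal normalization rather than an explicit specialization of indices.
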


\begin{proof}
Note that the points in  $\log \underline{\Delta}_n^+(\T_0)$ are of the form $(q_{ij})=(\log p_{ij})$, where 
\[
q_{ii}=\log p_{ii}=0 \ \ \text{and} \ \ q_{ij} =\log p_{ij} \ge \log \sqrt{p_{ii}p_{jj}} =0,
\]
and in addition, for distinct indices $i,j,k,l$, we have
\[
q_{ij}+q_{kl}=\log p_{ij}p_{kl} \le \log \max(p_{ik}p_{jl},p_{il}p_{jk}) = \max(q_{ik}+q_{jl}, q_{il}+q_{jk}).
\]
Thus,  the first statement is the characterization of tree metrics by the ``four-point condition'', which is exactly the logarithm of the condition defining $\underline{\Delta}^+_n(\T_0)$ \cite[Lemma 4.3.6]{Maclagan-Sturmfels}.


For the second statement, observe that the cut vector
 $\delta(S)$ is the tree metric corresponding to the phylogenetic tree that consists of a single edge of length $1$ that connects the leaves in $S$ with the leaves not in $S$. 
Therefore, the extremal rays of $\Cut_n$ are in $\log \underline{\Delta}_n^+(\T_0)$. 
For the other inclusion, consider a tree metric $d$  given by a phylogenetic tree $\tau$ and a length function $\ell$. For each edge $e$ of $\tau$, let $S(e)$ be the set of leaves that remain connected to the distinguished leaf $1$ after removing $e$ from $\tau$. Then $d$ admits the decomposition
\[
d=\sum_e \ell(e) \delta(S(e)),
\]
showing that $d$ is in the cut cone.
This proves the second statement.

For the third statement, notice that
\begin{align*}
\underline{\BR}\left(\Delta_n^+(\T_0)\right)
&=\left\{ \alpha \in \RR^{\binom{n}{2}}  \;\middle|\;  \exists c> 0\text{ such that } \sum_{1\le i< j\le n}\alpha_{ij}q_{ij}\le 	\log c \ \ \text{for all $q\in \log \underline{\Delta}_n^+(\T_0)$}\right\}\\
&=\left\{ \alpha\in \RR^{\binom{n}{2}}  \;\middle|\;  \exists c> 0\text{ such that } \sum_{1\le i< j\le n}\alpha_{ij}q_{ij}\le 	\log c \ \ \text{for all $q\in \Cut_n$}\right\}\\
&= \left\{ \alpha \in \RR^{\binom{n}{2}}  \;\middle|\;  \sum_{1\le i< j\le n}\alpha_{ij}q_{ij}\le 0 \ \ \text{for all $q\in \Cut_n$}\right\}.
\end{align*}
The first equality is the definition of $\underline{\BR}$, the second follows from the fact that $\Cut_n$ is the convex hull of $\log \underline{\Delta}_n^+(\T_0)$, and the third follows from the fact that $\Cut_n$ is a cone.
\end{proof}

We now review Gromov's tree approximation theorem for $\delta$-hyperbolic spaces. 
Let $\delta$ be a nonnegative real number. A metric $d$  on $[n]$ 
is called \emph{$\delta$-hyperbolic} if
\[
d_{ij}+d_{kl}\le \max \bigl(d_{ik}+d_{jl}+2\delta, d_{il}+d_{jk}+2\delta\bigr) \ \ \text{for all $i,j,k,l\in [n]$.}
\]
As noted above, when $\delta=0$, the condition is equivalent to $d$ being a tree metric. 
We will use the following corollary of the tree approximation theorem; see {\cite[Section 6.1]{Gromov} and \cite[Chapitre 2]{GdlH}}.

\begin{proposition}
	\label{prop:tree-approx}
	For any positive real number $\delta$ and a positive integer $n$,  
there is a constant $c$ 
such that any $\delta$-hyperbolic metric $d$ on $[n]$ can be approximated by a $0$-hyperbolic metric $d'$: 
		\[
        d_{ij}-c\le d'_{ij}\le d_{ij} \ \ \text{for all $i,j \in [n]$}.
        \]
\end{proposition}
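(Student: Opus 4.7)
The plan is Gromov's classical tree approximation, executed via the Gromov product. Fix any basepoint $w \in [n]$ and, for $i,j \in [n]$, set
\[
(i \mid j)_w \coloneq \tfrac{1}{2}\bigl(d_{iw} + d_{jw} - d_{ij}\bigr).
\]
A direct rewriting of the four-point condition defining $\delta$-hyperbolicity shows that it is equivalent to the Gromov product inequality
\[
(i \mid j)_w \ \ge \ \min\bigl((i\mid k)_w,\, (j \mid k)_w\bigr) - \delta \quad \text{for all $i, j, k \in [n]$,}
\]
and by Lemma~\ref{lem:convhullT0}\,(1), the $\delta = 0$ version of this inequality is exactly the ultrametric property on Gromov products that characterizes tree metrics.

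I would ultrametrize the Gromov products using the \emph{path-bottleneck}
\[
(i \mid j)_w^{\ast} \coloneq \max_{i = x_0,\,x_1,\,\dots,\,x_k = j}\ \min_{0 \le m < k}\,(x_m \mid x_{m+1})_w,
\]
where the maximum ranges over sequences in $[n]$. Concatenation of witness sequences makes $(\cdot\mid\cdot)_w^{\ast}$ ultrametric, while the trivial sequence $i,j$ gives $(i\mid j)_w^{\ast} \ge (i\mid j)_w$. I then set
\[
d'_{ij} \coloneq d_{iw} + d_{jw} - 2(i \mid j)_w^{\ast}.
\]
The triangle inequality for $d$ yields $(i \mid x)_w \le \min(d_{iw}, d_{xw})$ for any $x$, whence $(i \mid j)_w^{\ast} \le \min(d_{iw}, d_{jw})$ and therefore $d'_{ij} \ge 0$; the ultrametric property, inverted through the displayed formula, produces the four-point condition for $d'$ with $\delta = 0$, so that $d'$ is a tree metric; and $(i \mid j)_w^{\ast} \ge (i \mid j)_w$ gives the upper bound $d'_{ij} \le d_{ij}$ for free.

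The main obstacle is the quantitative \emph{distortion bound} $d_{ij} - d'_{ij} \le c$. I would establish the iterated estimate
\[
(x_0 \mid x_k)_w \ \ge \ \min_{0 \le m < k}\,(x_m \mid x_{m+1})_w \ - \ \delta \lceil \log_2 k \rceil
\]
by a divide-and-conquer use of the one-step Gromov product inequality: split the sequence at its midpoint, apply the three-term inequality to pair the two halves, and recurse on each half. A bottleneck-maximizing sequence may be assumed to have distinct vertices, since shortcutting any repetition can only increase the bottleneck, so $k \le n-1$. This yields $(i\mid j)_w^{\ast} - (i\mid j)_w \le \delta \lceil \log_2 n \rceil$ and hence the constant $c \coloneq 2\delta \lceil \log_2 n \rceil$ works.
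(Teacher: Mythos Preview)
Your argument is correct and is precisely Gromov's classical tree-approximation proof. The paper does not give its own proof of this proposition; it simply quotes the result from \cite[Section~6.1]{Gromov} and \cite[Chapitre~2]{GdlH}, and what you have written is exactly the argument found there: reformulate $\delta$-hyperbolicity via Gromov products at a basepoint, ultrametrize by the path-bottleneck construction, and bound the distortion by the divide-and-conquer estimate $(x_0\mid x_k)_w \ge \min_m (x_m\mid x_{m+1})_w - \delta\lceil\log_2 k\rceil$, yielding $c = 2\delta\lceil\log_2 n\rceil$.

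One small expository point: your phrase ``a direct rewriting \dots\ shows that it is equivalent'' is accurate only in one direction. Specializing the four-point condition to $l=w$ immediately gives the Gromov-product inequality $(i\mid j)_w \ge \min((i\mid k)_w,(j\mid k)_w)-\delta$, which is all you need for the distortion bound. The converse at general $\delta$ is not a rewriting (it holds only up to doubling $\delta$), and at $\delta=0$ the implication ``ultrametric Gromov products at one basepoint $\Rightarrow$ four-point condition for all quadruples'' is a short but genuine lemma rather than a substitution. Since you invoke Lemma~\ref{lem:convhullT0}\,(1) (tree metrics $=$ four-point condition with $\delta=0$) at that step, you are implicitly using this lemma; it is standard and easy, but worth a sentence.
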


\begin{corollary}
\label{lem:finite-hausdorff}
For any positive integer $n$, there is a constant $e^c$ such that
	for any $(p_{ij})\in \underline{\Delta}_n^+(\T_2)$, there is $(p_{ij}')\in \underline{\Delta}_n^+(\T_0)$ satisfying the condition
	\[e^{-c}p_{ij}\le p_{ij}' \le p_{ij}\ \ \text{for all }i,j\in [n].\]
\end{corollary}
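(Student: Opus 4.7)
The plan is to pass to logarithmic coordinates and invoke Gromov's tree approximation theorem in the form of Proposition~\ref{prop:tree-approx}. Given $(p_{ij})\in \underline{\Delta}_n^+(\T_2)$, I would associate a $\delta$-hyperbolic metric on $[n]$ with $\delta=\log 2$ by setting
\[
d_{ij}=\log p_{ij}+2\log 2 \quad\text{for } i\neq j, \qquad d_{ii}=0.
\]
The additive shift of $2\log 2$ is the minimum needed to turn $(\log p_{ij})_{i\neq j}$ into the off-diagonal entries of an honest metric on $[n]$.

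The next step is a routine verification that $d$ is a $(\log 2)$-hyperbolic metric; each required property arises from a specialization of the $\T_2$ inequality. Nonnegativity $d_{ij}\geq\log 2$ follows from $p_{ij}\geq 1/2$, which one obtains by applying $\T_2$ to the quadruple $(i,i,j,j)$. The triangle inequality $d_{ij}\leq d_{ik}+d_{jk}$ is equivalent to $p_{ij}\leq 4p_{ik}p_{jk}$, which is $\T_2$ applied to $(i,j,k,k)$. The four-point condition $d_{ij}+d_{kl}\leq \max(d_{ik}+d_{jl},d_{il}+d_{jk})+2\log 2$ follows by squaring the $\T_2$ inequality and applying the elementary bound $(a+b)^2\leq 4\max(a,b)^2$.

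With $d$ confirmed as $(\log 2)$-hyperbolic, I would apply Proposition~\ref{prop:tree-approx} with $\delta=\log 2$ to extract a tree metric $d'$ on $[n]$ and a dimension-dependent constant $c_0=c_0(n)$ satisfying $d_{ij}-c_0\leq d'_{ij}\leq d_{ij}$. Setting $p'_{ij}=e^{d'_{ij}}$ for $i\neq j$ and $p'_{ii}=1$ produces, by Lemma~\ref{lem:convhullT0}(1), an element of $\underline{\Delta}_n^+(\T_0)$. The inequalities on $d'$ translate to $4e^{-c_0}p_{ij}\leq p'_{ij}\leq 4p_{ij}$, giving the desired comparison between $(p_{ij})$ and $(p'_{ij})$ after consolidating the multiplicative constants into the final $e^c$.

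The main obstacle is the calibration of the shift. Without any shift, $d=\log p$ need not satisfy the triangle inequality---the $\T_2$ condition only yields $p_{ij}\leq 4p_{ik}p_{jk}$, not $p_{ij}\leq p_{ik}p_{jk}$---so Gromov's theorem cannot be applied directly. The choice $2\log 2$ is the minimal shift that simultaneously turns $d$ into a genuine metric and matches the multiplicative slack of $4$ already present in the $\T_2$ four-point condition. Once this choice is fixed, the remaining steps of invoking Proposition~\ref{prop:tree-approx} and exponentiating back are essentially bookkeeping between the additive (metric) and multiplicative (matrix entry) formulations.
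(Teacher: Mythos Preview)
Your approach coincides with the paper's: pass to logarithms, check $(\log 2)$-hyperbolicity, and apply Gromov's tree approximation (Proposition~\ref{prop:tree-approx}) together with Lemma~\ref{lem:convhullT0}(1). The paper works directly with $q_{ij}=\log p_{ij}$, verifying the same three conditions you list but without introducing any shift; your shift by $2\log 2$ is a reasonable extra precaution to make $d$ an honest metric before invoking Proposition~\ref{prop:tree-approx}.

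There is one slip in your final step. From $d_{ij}-c_0\le d'_{ij}\le d_{ij}$ with $d_{ij}=\log p_{ij}+2\log 2$ you obtain $4e^{-c_0}p_{ij}\le p'_{ij}\le 4p_{ij}$, and the factor $4$ on the \emph{upper} side cannot be ``consolidated into the final $e^c$'': the corollary requires $p'_{ij}\le p_{ij}$, with no multiplicative constant on the right. The simplest repair is to note that the two-sided estimate $e^{-c}p_{ij}\le p'_{ij}\le e^{c}p_{ij}$ already suffices for the only downstream use of the corollary (Lemma~\ref{lem:same-BR} and hence Theorem~\ref{mth:BR}). Alternatively, one can apply Gromov's construction directly to $q=\log p$, as the paper does, which yields $q'\le q$ and hence $p'\le p$; this avoids the extra factor but at the price of invoking Proposition~\ref{prop:tree-approx} for a function that satisfies the four-point condition without literally being a metric.
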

\begin{proof}
	We reformulate the statement in terms of log coordinates, where $q_{ij}=\log p_{ij}$. Since $(p_{ij})\in \underline{\Delta}_n^+(\T_2)$, we have 
	\begin{enumerate}[(1)]\itemsep 5pt
	\item $q_{ii}=0$ for all $i\in [n]$;
\item $q_{ij}\leq q_{ik}+q_{jk}+2\log 2$ for all distinct $i,j,k\in [n]$;
\item $q_{ij}+q_{kl}\leq \max(q_{ik}+q_{jl}+2\log2, q_{il}+q_{jk}+2\log 2)$ for all distinct $i,j,k,l\in [n]$,
	\end{enumerate}
	where the last condition follows from 
    \[
    \sqrt{p_{ij}p_{kl}}\leq \sqrt{p_{ik}p_{jl}}+\sqrt{p_{il}p_{jk}}\leq \max(2\sqrt{p_{ik}p_{jl}}, 2\sqrt{p_{il}p_{jk}}).
    \]
Thus any such $(q_{ij})$ is a $\delta$-hyperbolic metric on $[n]$ for $\delta=\log 2$. The conclusion follows from
	Lemma~\ref{lem:convhullT0}  and Proposition~\ref{prop:tree-approx}.
\end{proof}

For the following lemma, let $X\subseteq Y$ be arbitrary subsets of $n \times n$ symmetric matrices with positive entries.

\begin{lemma}
\label{lem:same-BR}
Suppose that there is a constant $e^c$ such that, 
for any $(p_{ij}) \in Y$, there is 
$(p'_{ij})\in X$ satisfying $e^{-c}p_{ij}\le p'_{ij}\le p_{ij}$ for all $i,j \in [n]$. Then $\BR(X)=\BR(Y)$.
\end{lemma}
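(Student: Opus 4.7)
The plan is to establish the two inclusions $\BR(X)\supseteq\BR(Y)$ and $\BR(X)\subseteq\BR(Y)$ separately. The first is immediate from the hypothesis $X\subseteq Y$: any multiplicative inequality $\prod_{i\le j}p_{ij}^{\alpha_{ij}}\le c'$ valid for all $(p_{ij})\in Y$ restricts to a valid inequality for all $(p_{ij})\in X$. So the real content is the reverse inclusion $\BR(X)\subseteq \BR(Y)$, which asserts that inequalities on the smaller set automatically extend (with a possibly enlarged constant) to the larger set.

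For $\BR(X)\subseteq \BR(Y)$, I would start with $\alpha=(\alpha_{ij})\in \BR(X)$ witnessed by a constant $c'>0$, take an arbitrary $(p_{ij})\in Y$, and use the hypothesis to select $(p'_{ij})\in X$ with $e^{-c}p_{ij}\le p'_{ij}\le p_{ij}$. The goal is to compare $\prod_{i\le j} p_{ij}^{\alpha_{ij}}$ with $\prod_{i\le j}(p'_{ij})^{\alpha_{ij}}$ and absorb the comparison factor into a new uniform constant. The bound on the latter product comes for free from $\alpha\in\BR(X)$, so the task reduces to an entrywise estimate.

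The key pointwise inequality is
\[
\left(\frac{p_{ij}}{p'_{ij}}\right)^{\alpha_{ij}}\le e^{c\,\alpha_{ij}^+}, \quad \text{where } \alpha_{ij}^+:=\max(\alpha_{ij},0),
\]
which separates into two cases according to the sign of $\alpha_{ij}$: when $\alpha_{ij}\ge 0$, the upper bound $p_{ij}/p'_{ij}\le e^c$ gives it directly; when $\alpha_{ij}<0$, the lower bound $p_{ij}/p'_{ij}\ge 1$ already forces $(p_{ij}/p'_{ij})^{\alpha_{ij}}\le 1=e^{c\cdot 0}$. Multiplying these estimates across all $i\le j$ and using $\alpha\in \BR(X)$ yields
\[
\prod_{i\le j} p_{ij}^{\alpha_{ij}}\le e^{c\sum_{i\le j}\alpha_{ij}^+}\prod_{i\le j}(p'_{ij})^{\alpha_{ij}}\le e^{c\sum_{i\le j}\alpha_{ij}^+}\cdot c',
\]
a uniform bound over $(p_{ij})\in Y$, showing $\alpha\in \BR(Y)$.

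There is no substantive obstacle here; the only point that requires care is the asymmetry between positive and negative components of $\alpha$, which is why only $\alpha^+$ contributes to the absorbed factor. Conceptually the lemma says that a one-sided multiplicative approximation $X\subseteq Y\subseteq e^c\cdot X$ is a quasi-isometry in log-coordinates, and multiplicatively bounded ratios are insensitive to such quasi-isometric perturbations. This is exactly what is needed, combined with Corollary~\ref{lem:finite-hausdorff}, to transfer bounded ratios between $\underline{\Delta}_n^+(\T_0)$ and $\underline{\Delta}_n^+(\T_2)$, and hence to $\underline{\L}_n^+$ sandwiched in between.
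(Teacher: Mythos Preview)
Your proof is correct and follows the same approach as the paper's: both use $X\subseteq Y$ for one inclusion and the entrywise approximation for the other. You have spelled out the comparison estimate $(p_{ij}/p'_{ij})^{\alpha_{ij}}\le e^{c\,\alpha_{ij}^+}$ more explicitly than the paper does, which simply asserts that boundedness of $\prod (p'_{ij})^{\alpha_{ij}}$ implies boundedness of $\prod p_{ij}^{\alpha_{ij}}$.
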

\begin{proof}
Since $X\subseteq Y$, we have $\BR(Y)\subseteq \BR(X)$. 
We prove the reverse inclusion. Given $(p_{ij}) \in Y$,
our assumption says that there is $(p_{ij}') \in X$ such that
\[
e^{-c}p_{ij}\le p'_{ij}\le p_{ij} \ \ \text{for all $i,j \in [n]$.}
\]
If $\prod_{i\le  j}  {p_{ij}'}^{\alpha_{ij}}$ is bounded above by a constant, then the same is true for $\prod_{i\le  j}  {p_{ij}}^{\alpha_{ij}}$.
\end{proof}

\begin{proof}[Proof of Theorem~\ref{mth:BR}]
Proposition~\ref{prop:inclusions}, Corollary~\ref{lem:finite-hausdorff}, and 
Lemma~\ref{lem:same-BR}, we have
\[
\underline{\BR}(\Delta_n^+(\T_0))=\underline{\BR}(\L_n^+)=\underline{\BR}(\Delta_n^+(\T_2)).
\]
Lemma~\ref{lem:convhullT0} says that this cone is dual to the cut cone $\Cut_n$.
\end{proof}

\section{Optimal bounding constants for $\L_3^+$}

We determine the optimal bounding constants on $\underline{\BR}(\L_3^+)$.

\begin{proof}[Proof of Theorem~\ref{mth:entropy}]
We consider the triangular section of the cone of reduced bounded ratios 
\[
\left\{a \cdot \alpha^{23|1}+b \cdot \alpha^{13|2}+c \cdot \alpha^{12|3}  \;\middle|\;  \text{$a,b,c$ are nonnegative numbers that sum to $1$}\right\} \subseteq \underline{\BR}(\L_3^+).
\]
The condition $a^2+b^2+c^2-2ab-2ac-2bc=0$ defines an inscribed circle in the given triangle, where the three tangent points are given by
\[
(a,b,c) \ = \ (1/2,1/2,0), \ \ (1/2,0,1/2), \ \ (0,1/2,1/2).
\]
These points are the midpoints of the sides of the triangle, corresponding to the Alexandrov--Fenchel inequalities. 

The inscribed circle splits the triangle in four regions. The part within the circle is defined by 
\[
a^2+b^2+c^2-2ab-2ac-2bc\leq 0.
\]
Outside the circle, the connected component containing the vertex $\alpha^{23|1}$ is defined by
\[
a^2+b^2+c^2-2ab-2ac-2bc > 0,  \ \ a \ge b, \ \ a \ge c.
\]
The other two regions outside of the inscribed circle are described similarly. 

We may reformulate the objective as the problem of   finding the global maximum of 
\[
R(p_{12},p_{13},p_{23})\coloneqq(p_{12})^{c-b-a}(p_{13})^{b-c-a}(p_{23})^{a-b-c},
\]
subject to certain constrains on $p_{12},p_{13},p_{23}$. More precisely, we want to find the global maximum of $R(p_{12},p_{13},p_{23})$ in terms of the parameters $a,b,c$  when $p_{12},p_{13},p_{23}\geq 1$ and
\[
\det\begin{pmatrix}
	1 & p_{12} & p_{13} \\
	p_{12} & 1      & p_{23} \\
	p_{13} & p_{23} & 1
\end{pmatrix}= 
1+2p_{12}p_{13}p_{23}-p^2_{12}-p^2_{13}-p^2_{23}\geq 0.
\]
These inequalities are equivalent to the condition that  $3 \times 3$ matrix $(p_{ij})$ being Lorentzian.

We apply Lagrange's method to $R(p_{12},p_{13},p_{23})$ separately on the three regions outside the inscribed circle. On the region containing $\alpha^{23|1}$,  one finds that the interior of the domain has no critical points, and thus the maximum is attained on the boundary. 
The interesting case is when 
\[
1+2p_{12}p_{13}p_{23}-p^2_{12}-p^2_{13}-p^2_{23}=0.
\]
Since $p_{12},p_{13},p_{23} \ge 1$, the condition is equivalent to 
\[
p_{23}=p_{12}p_{13}+\sqrt{(p_{12}-1)(p_{13}-1)}.
\]
Thus, setting $x=p^{-2}_{12}$ and $y=p^{-2}_{13}$, the question reduces to finding the global maximum of 
\[
r(x,y) \coloneq x^{b}y^{c}\left(1+\sqrt{(1-x)(1-y)}\right)^{a-b-c} \ \ \text{when $0\leq x,y \leq 1$}.
\]
The unique critical point of $r(x,y)$ has coordinates 
\[
x_0=4ab/(a+b-c)^2  \ \ \text{and} \ \  y_0=4ac/(a-b+c)^2,
\]
and they satisfy $0 \le x_0, y_0 \le 1$ because $a^2+b^2+c^2-2ab-2ac-2bc >0$. Evaluating, we get
\[
r(x_0,y_0)=2\cdot a^a\cdot b^b \cdot c^c \cdot (2a-1)^{2a-1} \cdot (1-2b)^{2b-1}\cdot (1-2c)^{2c-1}.
\]
When $a^2+b^2+c^2-2ab-2ac-2bc=0$, we have $(a+b-c)^2=4ab$ and $(a-b+c)^2=4ac$. In this case, the critical point is $(1,1)$ and $r(x,y)$ has value $1$ at the critical point. 
Thus, when $(a,b,c)$ lies in the the boundary of the inscribed circle, $f(a,b,c)=1$. By the log-convexity of $f(a,b,c)$, 
 we conclude that $f(a,b,c)\leq 1$ inside the circle. On the other hand, the matrix with all entries $1$ is Lorentzian, and any ratio for this matrix equals $1$. Thus, $f(a,b,c)=1$ inside the circle.
\end{proof}

We compare the optimal bounding constants $f(a,b,c)$ for $\L_3^+$ with the corresponding constants $f_p(a,b,c)$ for $\Delta_3^+(\T_p)$, which is piecewise log-linear. 
\begin{proposition}
Consider the reduced bounded ratio 
$\alpha=a \cdot \alpha^{23|1}+b \cdot \alpha^{13|2}+c \cdot \alpha^{12|3}$ on $\Delta_n^+(\T_p)$, where $a,b,c$ are nonnegative numbers. 
The optimal bounding constant for $\alpha$ is given by
\[
f_p(a,b,c)=\begin{cases}
2^{ap} & \text{if} \; a>b+c,\\
2^{bp} & \text{if} \; b>a+c,\\
2^{cp} & \text{if} \; c>a+b,\\
2^{\frac{p(a+b+c)}{2}} & \text{if otherwise}.
\end{cases}
\]	    
\end{proposition}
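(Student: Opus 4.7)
The plan is to recast the question as a linear program in the log-coordinates $u_{ij}\coloneqq\log p_{ij}$. Since the ratio involves only indices in $\{1,2,3\}$ and any $3\times 3$ principal submatrix of a matrix in $\Delta_n^+(\T_p)$ lies in $\Delta_3^+(\T_p)$, the upper bound reduces to the case $n=3$. The extremizers can then be extended to $n\times n$ matrices in $\Delta_n^+(\T_p)$ by padding with off-diagonal entries equal to $2^{-p/2}$.

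Setting the diagonals to $1$, the four-point conditions defining $\underline{\Delta}_3^+(\T_p)$ reduce to two families: the choice $k=l$ gives the triangle-type bounds $u_{ij}\le p\log 2+u_{ik}+u_{jk}$, while the choice $i=k$, $j=l$ gives the floor bounds $u_{ij}\ge -\tfrac{p}{2}\log 2$; other instances are redundant or trivial. On this polyhedron I would maximize the linear functional
\[
L(u)=(c-a-b)\,u_{12}+(b-a-c)\,u_{13}+(a-b-c)\,u_{23},
\]
which is the logarithm of the target ratio. A crucial observation is that at most one of the conditions $a>b+c$, $b>a+c$, $c>a+b$ can hold when $a,b,c\ge 0$, producing a clean case split.

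In the \emph{triangle case} where all triangle inequalities for $(a,b,c)$ hold, every coefficient of $L$ is non-positive, so the floor bounds immediately give $L(u)\le \tfrac{p(a+b+c)}{2}\log 2$, with equality at $p_{ij}=2^{-p/2}$ for all $i\ne j$. In the \emph{extremal case}, say $a>b+c$, only the coefficient of $u_{23}$ is positive; I would substitute the triangle-type bound $u_{23}\le p\log 2+u_{12}+u_{13}$ into $L$. The resulting functional in $u_{12},u_{13}$ has coefficients $-2b$ and $-2c$, both non-positive, and the floor bounds produce $L(u)\le ap\log 2$. The extremizer is $p_{12}=p_{13}=2^{-p/2}$, $p_{23}=1$; the remaining two cases follow by the $S_3$-symmetry of the setup.

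The only delicate step is confirming the extremizers lift to $\Delta_n^+(\T_p)$ for $n>3$. Setting every new off-diagonal entry to $2^{-p/2}$ reduces each relevant four-point condition to an elementary comparison among $1$, $2^{-1/2}$, and $2^{-1}$, and I expect no further obstacle.
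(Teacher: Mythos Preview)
Your proposal is correct and follows essentially the same approach as the paper: both pass to log-coordinates and solve the resulting linear program over the polyhedron cut out by the $\T_p$ inequalities. The only cosmetic difference is that the paper enumerates the four vertices $(-p/2,-p/2,-p/2)$, $(0,-p/2,-p/2)$, $(-p/2,0,-p/2)$, $(-p/2,-p/2,0)$ of the feasible region and evaluates $\ell$ at each, whereas you split into cases on the signs of the coefficients of $L$ and substitute the active constraints directly; the two are equivalent ways of solving the same LP. Your discussion of lifting the extremizers to $\Delta_n^+(\T_p)$ for $n>3$ by padding with $2^{-p/2}$ is a detail the paper omits, and it is correct.
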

\begin{proof}
Let $q_{ij}=\log_2(p_{ij}),$ where $i,j$ are any two distinct indices from $\{1,2,3\}$. In log coordinates, the problem reduces to the maximization of the linear function
\[
\ell(a,b,c)\coloneq a(q_{23}-q_{12}-q_{13})+b(q_{13}-q_{12}-q_{23})+c(q_{12}-q_{13}-q_{23}), 
\]
where the domain given by the linear inequalities 
\begin{align*}
q_{23} &\leq p + q_{12} + q_{13}, & \
q_{13} &\leq p + q_{12} + q_{23}, & \
q_{12} &\leq p + q_{13} + q_{23}, \\
0 &\leq p + q_{12}+q_{12}, & \
0 &\leq p + q_{13}+q_{13}, & \
0 &\leq p + q_{23}+q_{23}.
\end{align*}
The maximum of $\ell(a,b,c)$ is attained at one of the vertices of the domain. The solution to the system contains four vertices
\[
 (0,-p/2,-p/2), \ \ (-p/2,0,-p/2), \ \ (-p/2,-p/2,0), \ \ \text{and} \ \ (-p/2,-p/2,-p/2),
\]
corresponding to the four cases in the displayed equation for $f_p(a,b,c)$.
\end{proof}


\section{Rank 2 Lorentzian matrices}\label{sec:rank2}
%
%
%
%

We show that the cone of bounded ratios on Lorentzian matrices is equal to the cone of bounded ratios on rank $2$ Lorentzian matrices. 
The space of rank $2$ Lorentzian matrices admits a natural parametrization, which allows us 
 to formulate a stronger notion of positivity called the \emph{subtraction-freeness}. Throughout, we work with $n \times n$ symmetric matrix $M$ with real entries. 

\begin{lemma}\label{lem:quadr1} 
The quadratic form $Q(x)=x^{T}Mx$ factors into a product of two linear forms if and only if there is a rank at most $1$ matrix $B$  such that $M=\frac{1}{2}(B+B^{T})$.    
\end{lemma}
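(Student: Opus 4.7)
The plan is to exploit the one-to-one correspondence between symmetric matrices and quadratic forms together with the elementary fact that rank-one matrices are exactly outer products $ab^T$.

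For the ``only if'' direction, suppose $Q(x) = L_1(x) L_2(x)$ with $L_1(x) = a^T x$ and $L_2(x) = b^T x$ for some vectors $a, b \in \mathbb{R}^n$. Expanding,
\[
Q(x) = (a^T x)(b^T x) = \sum_{i,j} a_i b_j \, x_i x_j = x^T (ab^T) x.
\]
Set $B \coloneq ab^T$, which has rank at most $1$. Then $x^T M x = x^T B x$ for all $x$, so $M$ and $B$ define the same quadratic form. Since the symmetric matrix associated to a given quadratic form is unique, and the symmetric part of $B$ is $\frac{1}{2}(B + B^T)$, we conclude $M = \frac{1}{2}(B + B^T)$.

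For the ``if'' direction, suppose $M = \frac{1}{2}(B + B^T)$ with $\operatorname{rank}(B) \le 1$. If $B = 0$, then $M = 0$ and $Q(x) = 0$ factors trivially (e.g.\ as $0 \cdot x_1$); otherwise, write $B = ab^T$ for some nonzero vectors $a, b$. Then
\[
x^T M x = \tfrac{1}{2}\bigl(x^T B x + x^T B^T x\bigr) = x^T B x = x^T(ab^T)x = (a^T x)(b^T x),
\]
which is the desired factorization of $Q(x)$ into two linear forms.

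There is no real obstacle here, the lemma reduces to two textbook facts: the uniqueness of the symmetric representative of a quadratic form, and the characterization of rank-one matrices as outer products. The only minor point worth flagging is that ``rank at most $1$'' is the natural condition rather than ``rank exactly $1$,'' since one of the linear factors may vanish identically.
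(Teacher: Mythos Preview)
Your proof is correct and follows essentially the same approach as the paper's: both directions hinge on writing a rank-$\le 1$ matrix as $ab^T$ and using that $x^T B x = x^T\bigl(\tfrac{1}{2}(B+B^T)\bigr)x$. The only cosmetic differences are that you handle the $B=0$ case explicitly and invoke uniqueness of the symmetric representative more verbally, whereas the paper just displays the computation.
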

\begin{proof}
If the quadratic form $Q(x)$ factors into linear forms, we can rewrite
\[
Q(x)=x^{T}Mx=(x^{T}a)(b^{T}x)=x^{T}(ab^{T})x=x^T(ba^T)x,
\]
where $a$ and $b$ are column vectors. Then  $B=ab^{T}$ is a rank at most $1$ matrix and $Q(x)=x^{T}\left(\frac{B+B^{T}}{2}\right)x$.
Conversely, if $M=\frac{1}{2}(B+B^{T})$ for a rank at most $1$ matrix $B$, then 
\[Q(x)=x^{T}Mx=x^{T}\left(\frac{B+B^{T}}{2}\right)x=x^{T}Bx.\]
Since $B$ has rank at most $1$, there are real vectors $a$ and $b$ such that
$x^{T}Bx=x^{T}(ab^{T})x=(x^{T}a)(b^{T}x)$. 
\end{proof}

\begin{lemma}\label{lem:rank2} 
 There is a rank at most $1$ matrix $B$ such that $M=\frac{1}{2}(B+B^{T})$ if and only if $M$ has rank at most $2$ and all $2\times2$ principal minors of $M$ are nonpositive.
\end{lemma}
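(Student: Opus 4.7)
My plan is to prove the two implications separately. The $(\Leftarrow)$ direction reduces to a short direct calculation, while the $(\Rightarrow)$ direction is a case analysis on $\operatorname{rank}(M)$ whose only nontrivial step is ruling out definite signatures in the rank-two case.

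For $(\Leftarrow)$, I would write $B = ab^{T}$ for column vectors $a, b$, so that $m_{ij} = \tfrac{1}{2}(a_i b_j + a_j b_i)$. Expanding gives
\[
m_{ii}m_{jj} - m_{ij}^2 \;=\; a_ib_i\,a_jb_j - \tfrac{1}{4}(a_ib_j + a_jb_i)^2 \;=\; -\tfrac{1}{4}(a_ib_j - a_jb_i)^2 \;\le\; 0,
\]
and $M = \tfrac{1}{2}(ab^{T} + ba^{T})$ is a sum of two matrices of rank at most one, so $\operatorname{rank}(M) \le 2$.

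For $(\Rightarrow)$, I split into cases on $\operatorname{rank}(M)$. When $\operatorname{rank}(M)=0$, take $B=0$; when $\operatorname{rank}(M)=1$, any symmetric rank-one real matrix has the form $\lambda vv^{T}$, so take $B=M$, which is itself rank at most one and symmetric. The substantive case is $\operatorname{rank}(M)=2$. Here my key subclaim is that the signature of $M$ must be $(1,1)$. Granting this, I write
\[
M \;=\; vv^{T} - ww^{T}
\]
for linearly independent vectors $v, w$ (note $v \pm w \ne 0$, since otherwise $M=0$), and set $B \coloneq (v+w)(v-w)^{T}$. Then $\operatorname{rank}(B) = 1$ and a one-line expansion gives $\tfrac{1}{2}(B+B^{T}) = vv^{T} - ww^{T} = M$.

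The main obstacle—and essentially the only nontrivial step—is the signature claim in the rank-two case. My plan is to invoke the classical fact that a positive semidefinite matrix of rank $r$ admits an $r\times r$ principal submatrix of positive determinant: writing $M = C^{T}C$ with $C$ of rank two, two columns of $C$ must be linearly independent, say at indices $i, j$, whence $\det M_{\{i,j\}} = \det(C_{\{i,j\}}^{T} C_{\{i,j\}}) > 0$, contradicting nonpositivity of the $2\times 2$ principal minors. Applying the same argument to $-M$ rules out negative semidefinite rank-two $M$ as well, leaving only signature $(1,1)$ and completing the reduction.
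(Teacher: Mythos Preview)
Your argument is correct and follows essentially the same route as the paper: compute the $2\times 2$ principal minors directly from $B=ab^{T}$ for one direction, and for the other use the spectral decomposition in the rank-$2$ case to write $M=vv^{T}-ww^{T}$ and then take $B=(v+w)(v-w)^{T}$ (the paper writes this as $a=\sqrt{\lambda_1}\,u+\sqrt{-\lambda_2}\,v$, $b=\sqrt{\lambda_1}\,u-\sqrt{-\lambda_2}\,v$, which is the same construction). Two remarks: first, you have the labels $(\Leftarrow)$ and $(\Rightarrow)$ swapped---the computation starting from $B=ab^{T}$ is the forward implication, and the case analysis on $\operatorname{rank}(M)$ is the converse. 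Second, your treatment is in fact slightly more careful than the paper's in two places: your choice $B=M$ in the rank-$1$ case handles $M=-uu^{T}$ cleanly (the paper writes ``$M=uu^{T}$ for some $u$'', which omits the negative case), and your Gram-matrix argument spells out why a rank-$2$ semidefinite matrix must have a strictly positive $2\times 2$ principal minor, which the paper simply asserts.
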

\begin{proof}
If  $M=\frac{1}{2}(B+B^{T})$ for a rank at most $1$ matrix $B$, then, by Lemma \ref{lem:quadr1}, there is a factorization 
\[
x^{T}Mx=\left(\sum_{1\leq i\leq n}a_ix_i\right)\left(\sum_{1\leq i\leq n}b_ix_i\right).
\]
Thus, $M$ has rank at most $2$ and any $2 \times 2$ principal minor of $M$ is nonpositive:
\[(2a_ib_i)(2a_jb_j)-(a_ib_j+a_jb_i)^2=-a_i^2b_j^2+2a_ia_jb_ib_j-a_j^2b_i^2 \leq 0.\]

For the converse, suppose that $M$ has rank at most $2$ and that all $2\times2$ principal minors of $M$ are nonpositive. We want to find a matrix $B=ab^T$ 
for some real vectors $a,b$ such that $M=\frac{1}{2}(B+B^{T})$. If $M$ is rank at most 1, $M=uu^T$ for some $u$ and we take $a=b=u$. Now suppose $M$ is rank 2. 
Since $M$ has all $2\times 2$ principal minors nonpositive, $M$ cannot be positive or negative semidefinite, so $M$ must have eigenvalues $\lambda_1>0>\lambda_2$. 
Suppose $u$ and $v$ are orthonormal eigenvectors with these eigenvalues. Then $M=\lambda_1 uu^T+\lambda_2vv^T$. 
It then follows that we can take $a=(\lambda_1)^{1/2}u+(-\lambda_2)^{1/2}v$ and $b= (\lambda_1)^{1/2}u-(-\lambda_2)^{1/2}v$.
\end{proof}

\begin{proposition}
For any nonnegative vector $(a_i)_{1 \le i \le n}, (b_i)_{1 \le i \le n}$, the Hessian of the quadratic form $\left(\sum_{1\leq i\leq n}a_ix_i\right)\left(\sum_{1\leq i\leq n}b_ix_i\right)$ is Lorentzian, and every Lorentzian matrix of rank at most $2$ is of this form.
\end{proposition}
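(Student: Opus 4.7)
The plan is to treat the two directions separately: the forward direction by a direct signature argument, and the converse by a spectral construction that uses Perron--Frobenius to secure nonnegativity of the factors.

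For the forward direction, writing $L_a(x) = \sum_i a_i x_i$ and $L_b(x) = \sum_i b_i x_i$, I would factor
\[
4 L_a(x) L_b(x) = \bigl(L_a(x) + L_b(x)\bigr)^2 - \bigl(L_a(x) - L_b(x)\bigr)^2
\]
as a difference of two squares. This immediately pins down the signature of the associated Hessian $H = ab^T + ba^T$ as at most $(1,1)$, so $H$ has at most one positive eigenvalue; since its entries $a_i b_j + a_j b_i$ are manifestly nonnegative, $H$ is Lorentzian. An equivalent computation: the matrix of $H$ restricted to $\operatorname{span}(a,b)$ has trace $2(a \cdot b) \geq 0$ and determinant $(a \cdot b)^2 - |a|^2 |b|^2 \leq 0$ by Cauchy--Schwarz, so one eigenvalue is $\geq 0$ and the other $\leq 0$.

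For the converse, given a Lorentzian $M$ with $\operatorname{rank} M \leq 2$, I would first invoke Lemmas~\ref{lem:quadr1} and~\ref{lem:rank2}: each $2 \times 2$ principal submatrix of $M$ is itself Lorentzian by Cauchy interlacing, so its determinant is nonpositive, and the lemmas produce real vectors $\tilde{a}, \tilde{b}$ with $M = \tfrac{1}{2}(\tilde{a} \tilde{b}^T + \tilde{b} \tilde{a}^T)$. The issue is that this factorization need not have $\tilde{a}, \tilde{b} \geq 0$, and the approach is to rebuild it spectrally. Write $M = \lambda u u^T - \mu v v^T$ with $\lambda > 0$, $\mu \geq 0$, and orthonormal $u, v$, and use Perron--Frobenius on the nonnegative symmetric matrix $M$ to choose $u \geq 0$.

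The heart of the matter is then the pointwise bound $\sqrt{\lambda}\, u_i \geq \sqrt{\mu}\, |v_i|$, which is forced by nonnegativity of the diagonal entries: $M_{ii} = \lambda u_i^2 - \mu v_i^2 \geq 0$ together with $u_i \geq 0$. Setting $a = \sqrt{\lambda}\, u + \sqrt{\mu}\, v$ and $b = \sqrt{\lambda}\, u - \sqrt{\mu}\, v$, this bound guarantees $a, b \geq 0$, and a direct expansion gives $\tfrac{1}{2}(a b^T + b a^T) = \lambda u u^T - \mu v v^T = M$; absorbing the factor $\tfrac{1}{2}$ into the scalings of $a, b$ exhibits $M$ as the Hessian of $L_a(x) L_b(x)$. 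The main obstacle I expect is cleanly justifying the Perron--Frobenius step: one needs the special fact that for a nonnegative symmetric matrix the spectral radius coincides with the largest \emph{signed} eigenvalue, so that the Perron eigenvector is genuinely attached to the positive eigenvalue $\lambda$ of $M$ rather than to $-\mu$. Modulo this point, the rank-$0$ and rank-$1$ edge cases are subsumed by taking $\mu = 0$ (or $\lambda = \mu = 0$) in the same construction.
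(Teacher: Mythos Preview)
Your argument is correct in both directions. The forward half is the paper's reasoning recast through the difference-of-squares identity rather than Lemmas~\ref{lem:quadr1} and~\ref{lem:rank2}. For the converse you take a genuinely different route: the paper does not rebuild the factorization spectrally but instead takes an arbitrary real factorization $x^T M x = (\sum_i a_i x_i)(\sum_i b_i x_i)$ from the lemmas and shows by elementary sign-chasing that, after possibly replacing $(a,b)$ by $(-a,-b)$, both vectors are already nonnegative---the point being that $M_{ii}=a_ib_i\ge 0$ forces $a_i,b_i$ to share a sign, and then $2M_{1i}=a_1b_i+a_ib_1\ge 0$ propagates positivity from one index to the rest, with a short case split on whether some $a_ib_i>0$. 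This avoids Perron--Frobenius entirely. Your spectral construction is tidier and handles all ranks at once; the concern you raise about the Perron eigenvector evaporates on noting that $\operatorname{tr}(M)=\lambda-\mu\ge 0$, so $\lambda\ge\mu$, $\rho(M)=\lambda$, and the nonnegative eigenvector is attached to the eigenvalue you want. Your appeal to the lemmas in the converse is then redundant, since the spectral construction stands on its own.
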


\begin{proof}
Let $M$ be the Hessian of the quadratic form  $Q=\left(\sum_{1\leq i\leq n}a_ix_i\right)\left(\sum_{1\leq i\leq n}b_ix_i\right)$, where $a,b$ are nonnegative real vectors. 
By Lemma \ref{lem:quadr1} and Lemma \ref{lem:rank2}, we know that $M$ is of rank at most $2$ and all $2\times2$ principal minors of $M$ are nonpositive. Since $a,b$ are nonnegative, the entries of $M$ are nonnegative. Thus, the symmetric matrix $M$ is a Lorentzian matrix of rank at most $2$.

Conversely, for any nonzero Lorentzian matrix $M$ of rank at most $2$, by Lemma \ref{lem:quadr1} and Lemma \ref{lem:rank2}, the corresponding quadratic form $x^{T}Mx$ factors into a product 
\[x^{T}Mx=\left(\sum_{1\leq i\leq n}a_ix_i\right)\left(\sum_{1\leq i\leq n}b_ix_i\right) \ \ \text{for some real vectors $a,b$.}\]  
It remains to show that vectors $a,b$ can be chosen nonnegative. Since the entries of $M$ are nonnegative, $a_ib_i\geq 0$ for all $1\leq i\leq n$. 

First, consider the case when $a_ib_i=0$ for $1\leq i\leq n$. Since $M$ is nonzero, replacing all $a_i$ and $b_i$ by its negative and permuting the indices if necessary, we may suppose that $a_1>0$, which in turn implies that $b_1=0$. For any $b_i\neq 0$, since $a_1b_i+a_ib_1\geq0$, we have $b_i>0$. Thus, all nonzero $b_i$ are positive. Since $M$ is nonzero, one of the $b_i$ must be positive. Then we apply the same argument to conclude that all nonzero $a_i$ are positive. 

When $a_ib_i> 0$ for some $i$, without loss of generality, we may assume that $a_1>0$ and $b_1>0$. Since $a_1b_i+a_ib_1$ is nonnegative as well as $a_ib_i\geq0$ for any $i$, we conclude that all $a_i$ and $b_i$ are nonnegative. 
\end{proof}

Let $\L_n^+(2)$ denote the set of $n\times n$ Lorentzian matrices of rank at most $2$.
Since $\L_n^+(2)\subseteq \L_n^+$, we have 
$\underline{\BR}\bigl(\L_n^+\bigr) \subseteq \underline{\BR}\bigl(\L_n^+(2)\bigr)$.
We show that these cones coincide and they are dual to the cut cone $\Cut_n$.

\begin{proposition}\label{thm:BR_rk2}
We have 
	$
    \underline{\BR}\bigl(\L_n^+\bigr)=
    \underline{\BR}\bigl(\L_n^+(2)\bigr)$.
\end{proposition}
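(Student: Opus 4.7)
The plan is to exploit the dual characterization of $\underline{\BR}(\L_n^+)$ from Theorem~\ref{mth:BR}. The inclusion $\underline{\BR}(\L_n^+)\subseteq \underline{\BR}(\L_n^+(2))$ is immediate since $\L_n^+(2)\subseteq \L_n^+$. For the reverse inclusion, by Theorem~\ref{mth:BR} it suffices to verify that every $\alpha\in \underline{\BR}(\L_n^+(2))$ satisfies the cut inequality $\sum_{i<j}\alpha_{ij}\,\delta(S)_{ij}\le 0$ for each subset $S\subseteq [n]$. So the task reduces to producing, for each $S$, a family of normalized rank $2$ Lorentzian matrices whose entries concentrate along the cut vector $\delta(S)$.

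To construct this family, I would use the parametrization $M_{ij}=\tfrac{1}{2}(a_ib_j+a_jb_i)$ for nonnegative vectors $a,b$ established in the preceding lemmas. Setting $a_i=e^{s_i}$ and $b_i=e^{-s_i}$ gives $M_{ij}=\cosh(s_i-s_j)$, which is automatically normalized (all diagonal entries equal $1$) and has strictly positive off-diagonal entries, hence lies in $\underline{\L_n^+(2)}$. For $t>0$ and a fixed $S\subseteq [n]$, define $s_i(t)=t$ if $i\in S$ and $s_i(t)=0$ if $i\notin S$. Then the resulting matrix $M(t)$ satisfies $M(t)_{ij}=1$ when $i,j$ lie on the same side of the cut and $M(t)_{ij}=\cosh(t)$ when they lie on opposite sides. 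In log coordinates,
\[
\log M(t)_{ij}=\log\cosh(t)\cdot \delta(S)_{ij}.
\]

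Applying the bounded-ratio inequality $\prod_{i<j}M(t)_{ij}^{\alpha_{ij}}\le c$ and taking logarithms collapses the sum to
\[
\log\cosh(t)\cdot \sum_{i<j}\alpha_{ij}\,\delta(S)_{ij}\le \log c.
\]
Letting $t\to\infty$, the factor $\log\cosh(t)$ diverges to $+\infty$, so the inequality is sustainable only if $\sum_{i<j}\alpha_{ij}\,\delta(S)_{ij}\le 0$. Since $S$ was arbitrary, $\alpha$ lies in the dual of $\Cut_n$, and Theorem~\ref{mth:BR} then identifies $\alpha$ as an element of $\underline{\BR}(\L_n^+)$, giving the desired inclusion.

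I do not anticipate a real obstacle here: the argument is essentially a one-line asymptotic once the $\cosh$-parametrization is in hand. The only minor points to confirm are that the chosen $M(t)$ genuinely belongs to $\underline{\L_n^+(2)}$ (which follows from positivity of $a_i,b_i$) and that the cut cone is generated by the vectors $\delta(S)$, so testing against these alone suffices for the dual characterization from Theorem~\ref{mth:BR}.
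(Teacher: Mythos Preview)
Your argument is correct and follows essentially the same strategy as the paper: both reduce the nontrivial inclusion to showing that every cut vector $\delta(S)$ arises (up to a positive scalar in log coordinates) from a normalized rank $\le 2$ Lorentzian matrix, and then invoke Theorem~\ref{mth:BR}. The only cosmetic difference is that the paper uses the matrices with entries $1$ and $e^{t}$ (entrywise exponentials of points on the extremal rays of $\Cut_n$, identified as star-tree metrics lying in $\underline{\Delta}_n^+(\T_0)$), whereas you use the $\cosh$ parametrization with entries $1$ and $\cosh t$; both families realize $\delta(S)$ in log coordinates and the asymptotic conclusion is identical.
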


\begin{proof}
Theorem~\ref{mth:BR} says that $\Cut_n$ is dual to  $\underline{\BR}\bigl(\L_n^+\bigr)$. Thus, the dual of  $\Cut_n$ is contained in $\underline{\BR}\bigl(\L_n^+(2)\bigr)$. 
We show the opposite inclusion.

Let  $E$ be the set of entry-wise exponentials of extremal rays of $\Cut_n$, viewed as $n \times n$ symmetric matrices with diagonal entries $1$.
Since every element of $E$ is the tree metric of a star tree, it follows that $E \subseteq \underline{\Delta}_n^+(\T_0) \subseteq \underline{\L}_n^+$. Since every matrix  in $E$ has rank at most 2, we have
$E \subseteq \underline{\L}_n^+(2)$.
Thus, any reduced bounded ratio $\alpha$ on $\L_n^+(2)$ satisfies $\alpha \cdot \beta \le 0$ for all $\beta \in \log E$, and hence for all $\beta \in \Cut_n$. 
\end{proof}

We close with a statement that strengthens Conjecture~\ref{conj:intro-at-most-2} for rank $2$ Lorentzian matrices.

\begin{conjecture}\label{conj:sub-free}
Let $\alpha \in \BR(\L_n^+)$ be any integral bounded ratio and $(p_{ij})_{1\le i\le j\le n}$ be the Hessian matrix of the quadratic form $\left(\sum_{1\leq i\leq n}a_ix_i\right)\left(\sum_{1\leq i\leq n}b_ix_i\right)$. Then 
\[
2^{\sum_{i=1}^n\alpha_{ii}}\Biggl(\prod_{\alpha_{ij}<0}p_{ij}^{-\alpha_{ij}}\Biggr)-\Biggl(\prod_{\alpha_{ij}>0}p_{ij}^{\alpha_{ij}}\Biggr)\]
	is a polynomial with nonnegative coefficients in $a_i$ and $b_i$. 
\end{conjecture}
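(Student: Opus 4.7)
My plan is to (i) reduce Conjecture~\ref{conj:sub-free} to the case of primitive integral bounded ratios by showing that the claimed subtraction-freeness is stable under addition, and then (ii) verify the primitive cases directly using the classification of primitives by facets of $\Cut_n$.

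For the reduction, I would set $P_\alpha \coloneqq \prod_{\alpha_{ij}>0} p_{ij}^{\alpha_{ij}}$, $Q_\alpha \coloneqq \prod_{\alpha_{ij}<0} p_{ij}^{-\alpha_{ij}}$, and $N_\alpha \coloneqq \sum_i \alpha_{ii}$, so the conjecture reads that $2^{N_\alpha} Q_\alpha - P_\alpha$ has only nonnegative coefficients as a polynomial in $a_i, b_i$. Given a decomposition $\alpha = \alpha' + \alpha''$ into integral bounded ratios for which the conjecture already holds, write $2^{N_{\alpha'}} Q_{\alpha'} = P_{\alpha'} + E'$ and $2^{N_{\alpha''}} Q_{\alpha''} = P_{\alpha''} + E''$ with nonnegative $E'$ and $E''$. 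Multiplying and using $N_{\alpha'} + N_{\alpha''} = N_\alpha$ gives the identity
\[
2^{N_\alpha} Q_{\alpha'} Q_{\alpha''} - P_{\alpha'} P_{\alpha''} = P_{\alpha'} E'' + P_{\alpha''} E' + E' E'',
\]
whose right-hand side is manifestly a nonnegative-coefficient polynomial in $a,b$. Letting $R$ denote the common monomial $\prod_{i \le j} p_{ij}^{r_{ij}}$ in the $p$-variables that absorbs cancellations (so $P_{\alpha'} P_{\alpha''} = R\, P_\alpha$ and $Q_{\alpha'} Q_{\alpha''} = R\, Q_\alpha$), the identity rewrites as $R \cdot (2^{N_\alpha} Q_\alpha - P_\alpha) = P_{\alpha'} E'' + P_{\alpha''} E' + E' E''$. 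The first technical checkpoint is to show that division by such an $R$ preserves coefficient nonnegativity in $a,b$; this is not a general property of nonneg-coefficient polynomials, but I expect it to follow here by inducting on the number of $p_{ij}$-factors in $R$ while exploiting the specific bilinear structure $p_{ij} = a_i b_j + a_j b_i$.

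With closure under addition in hand, Theorem~\ref{mth:BR} reduces the problem to verifying the conjecture for primitive integral bounded ratios, which correspond to facets of $\Cut_n$. For $n \le 5$ these are exactly the triangular and pentagonal ratios (Theorem~\ref{mth:pentagonal}), and the verification is by direct polynomial expansion: the triangular case produces
\[
2 p_{13} p_{23} - p_{12} p_{33} = 2 a_1 a_2 b_3^2 + 2 a_3^2 b_1 b_2,
\]
which already underlies the proof of Lemma~\ref{lem:lor_plus_nsd}, and an analogous but lengthier computation should give a subtraction-free witness for the pentagonal ratio with constant $2^2 = 4$. For general $n$, I would first attack the hypermetric primitives parametrized by integer vectors $h$ with $\sum_i h_i = 1$: after substituting $p_{ij} = a_i b_j + a_j b_i$, both sides become products of binomials indexed by $h$, and one can seek a coefficient-wise injection from the monomials of $P_\alpha$ into those of $2^{N_\alpha} Q_\alpha$ governed by the structure of $h$.

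The main obstacle is that, as highlighted via the Karp--Papadimitriou theorem in the paper, the facets of $\Cut_n$ admit no tractable uniform description for large $n$, so a purely case-by-case strategy is hopeless. What is needed is a single combinatorial or algebraic mechanism---perhaps a weighted enumeration of trees, matchings, or cut-like configurations attached to the facet---that uniformly realizes $2^{N_\alpha} Q_\alpha - P_\alpha$ as a sum of nonnegative monomials in $a,b$. Identifying such a model, together with settling the coefficient-nonnegativity step in the closure-under-addition argument, is where the main difficulty will lie.
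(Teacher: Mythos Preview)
The statement you are attempting to prove is stated in the paper as a \emph{conjecture}; the paper gives no proof. All it offers is the remark that computations verify the claim for every primitive bounded ratio when $n\le 6$, together with the (unargued) assertion that the conjecture is preserved under nonnegative integral linear combinations, so that the $n\le 6$ case follows. There is therefore no paper proof to compare your proposal against.

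Your reduction step (i) hits a genuine obstacle, and the fix you propose does not work. The ``bilinear structure'' of $p_{ij}=a_ib_j+a_jb_i$ does \emph{not} guarantee that division by $R$ preserves coefficient nonnegativity: already
\[
(a_1b_2+a_2b_1)\cdot(a_1^2b_2^2-a_1a_2b_1b_2+a_2^2b_1^2)=a_1^3b_2^3+a_2^3b_1^3
\]
shows that a single factor $p_{12}$ can divide a nonnegative-coefficient polynomial whose quotient carries a negative coefficient, so the base case of your proposed induction on the factors of $R$ fails in general. Hence even after establishing that $R\,(2^{N_\alpha}Q_\alpha-P_\alpha)$ has nonnegative coefficients, you cannot conclude the same for $2^{N_\alpha}Q_\alpha-P_\alpha$ without some further input specific to bounded ratios---and identifying that input is essentially the content of the conjecture. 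For step (ii), you already correctly observe that the Karp--Papadimitriou obstruction rules out a case-by-case treatment of primitives for general $n$. Your accounting of the gaps is accurate: the proposal is a reasonable map of where the difficulties lie, but it is not a proof, and none is known.
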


For example, for $(a_1x_1+a_2x_2+a_3x_3)(b_1x_1+b_2x_2+b_3x_3)$ with $a_i,b_i \ge 0$, we have 
\[
\frac{p_{11}p_{23}}{p_{12}p_{13}}=\frac{(2a_1b_1)(a_2b_3+a_3b_2)}{(a_1b_2+a_2b_1)(a_1b_3+a_3b_1)} \le 2,
\]
and, in fact, the difference $2p_{12}p_{13}-p_{11}p_{23}=2a^2_1b_2b_3+2a_2a_3b_1^2$
has nonnegative coefficients.
Computations show that, for $n \le 6$, every integral bounded ratio on $\L_n^+$ is a nonnegative integral linear combination of primitive bounded ratios on $\L_n^+$, and, every primitive bounded ratios on on $\L_n^+$ satisfy Conjecture~\ref{conj:sub-free}. Since the validity of the conjecture is preserved under taking nonnegative integral linear combinations, this verifies Conjecture~\ref{conj:sub-free} for $n \le 6$.
For a similar subtraction-free phenomenon for bounded ratios of Pl\"ucker coordinates over totally positive Grassmannians was observed in \cite[Conjecture 37]{BoFr}. For a connection to cluster algebras,  see \cite{Gekhtman-Greenberg-Soskin}.   

\bibliography{biblio}
\bibliographystyle{amsalpha}

\end{document}